\documentclass[11pt]{amsart}

\usepackage[margin=1in]{geometry}

\usepackage{amsmath}
\usepackage{amssymb}
\usepackage{amsfonts}
\usepackage{amsthm}
\usepackage{mathtools}
\usepackage{titlesec}
\titleformat{\section}
  {\normalfont\large\centering}
  {\thesection.}
  {0.5em}
  {}
\usepackage[english]{babel}

\usepackage{mathrsfs}
\usepackage{bm}

\usepackage{enumitem}

\usepackage{booktabs}
\usepackage{array}

\usepackage{graphicx}
\usepackage{tikz}
\usetikzlibrary{arrows.meta, positioning, calc, matrix}

\usepackage{algorithm}
\usepackage{algpseudocode}

\usepackage{xcolor}

\usepackage[hidelinks]{hyperref}

\usepackage{cleveref}

\usepackage{microtype}

\usepackage[numbers,sort&compress]{natbib}

\newtheorem{theorem}{Theorem}[section]
\newtheorem{lemma}[theorem]{Lemma}
\newtheorem{proposition}[theorem]{Proposition}
\newtheorem{corollary}[theorem]{Corollary}

\theoremstyle{definition}
\newtheorem{definition}[theorem]{Definition}

\theoremstyle{remark}
\newtheorem{remark}[theorem]{Remark}
\newtheorem{conjecture}{Conjecture}[section]
\usepackage{titlesec}
\titlespacing*{\section}{0pt}{10pt}{4pt}
\titlespacing*{\subsection}{0pt}{8pt}{3pt}
\titlespacing*{\subsubsection}{0pt}{6pt}{2pt}
\title{The Delta Set of a Numerical Monoid Generated by a Generalized Arithmetic Sequence}
\author{Ashwin Narayan}
\date{August 2026}

\begin{document}
\pagestyle{plain}

\maketitle

\begin{abstract}
Let
\[
S_h=\langle a,ah+d,ah+2d,\ldots,ah+kd\rangle,
\qquad \gcd(a,d)=1,\quad 1<k<a,\quad h> 1,
\]
be a numerical monoid generated by a generalized arithmetic sequence. In this paper, we determine the delta set \[\Delta(S_h),\] resolving an open problem posed by Christopher O'Neill and Roberto Pelayo in 2014 \cite{ONeill}. Writing $e=h-1$ and letting $E(e,d)$ denote the set of values arising from the subtractive Euclidean algorithm applied to $e$ and $d$, we prove that
\[
\Delta(S_h)
=
E(h-1,d)\cup
\{M-r(h-1):M-r(h-1)>0,\ r\geq0\},
\]
where
\[
M=\left\lceil\frac{a}{k}\right\rceil(h-1)+d
\]
is the maximal element of $\Delta(S_h)$. In particular, $d$ and $h-1$ always belong to $\Delta(S_h)$. We further establish a reduction theorem showing that $\Delta(S_h)$ is a multiple of the delta set of a primitive monoid satisfying $\gcd(d,h-1)=1$, and deduce that the minimum delta is $\gcd(d,h-1)$. Finally, we investigate the relationship between delta sets and catenary degrees for these monoids, obtaining an explicit connection between their maximal values and formulating several conjectures concerning the catenary degree set. These results provide a structural description of factorization invariants for generalized arithmetic numerical monoids and suggest several directions for further study.
\end{abstract}
\section{Introduction}

Factorization theory provides a framework for studying the arithmetic
structure of numerical monoids and their non-unique factorizations.
For general background on numerical semigroups, see
Rosales and Garc\'ia-S\'anchez~\cite{RosalesGarcia}. A numerical
monoid is an additive submonoid of $(\mathbb{N}_0,+)$ of the form
\[
S=\langle n_1,n_2,\ldots,n_k\rangle,
\qquad
\gcd(n_1,\ldots,n_k)=1.
\]
In non-unique factorization theory, an element of a numerical semigroup may have multiple factorizations,
and these factorizations need not have the same length. The
\emph{delta set} records the successive differences between the
distinct factorization lengths of elements of the semigroup, providing a key measure of the extent of non-unique
factorization.

In this paper, we study the delta sets of the generalized arithmetic
numerical monoids
\[
S_h=
\langle
a,\,
ah+d,\,
ah+2d,\,
\ldots,\,
ah+kd
\rangle,
\]
where
\[
\gcd(a,d)=1,
\qquad
1<k<a,
\qquad
h> 1.
\]
These monoids form a natural generalization of numerical monoids
generated by arithmetic sequences and have been studied extensively
in the context of factorization invariants.

Our main result gives an explicit characterization of the delta set
in the range $1<k<a$. Previous work has established a number of
properties of delta sets for generalized arithmetic sequences and
related families; see, for example,
Bowles and Chapman~\cite{BowlesChapman}, and
Garc\'ia, Llena, and Moscariello~\cite{GarciaLlenaMoscarielloSym}.
However, an explicit description of the entire delta set in the
regime $1<k<a$ has remained unresolved.

We show that the delta set decomposes into two distinct components:
an arithmetic component determined by $a,h,d,k$, and a Euclidean
component determined by $d$ and $h-1$. More precisely, define
\[
M=\left\lceil\frac{a}{k}\right\rceil(h-1)+d.
\]
Then our main theorem gives
\[
\Delta(S_h)
=
E(d,h-1)
\cup
\left\{
M-r(h-1):
r\geq 0,\;
M-r(h-1)>0
\right\},
\], or equivalently $E(M,e)$
where $E(d,h-1)$ denotes the Euclidean component arising from the
subtractive Euclidean algorithm applied to $d$ and $h-1$.

In particular, the largest delta is given by
\[
\boxed{
\max\Delta(S_h)
=
\left\lceil\frac{a}{k}\right\rceil(h-1)+d.
}
\]
The maximal delta is governed by the arithmetic structure of
the generating sequence, while the remaining delta values exhibit
a Euclidean structure arising from the interaction between $d$ and
$h-1$.

We first explore
the arithmetic structure of the generators to parametrize
factorizations by lattice points in $\mathbb{Z}^2$. This transforms
the problem of determining factorization lengths into the study of
the geometry of a two-dimensional lattice. We then analyze the
resulting lattice through its column structure. The Three-Gap
Theorem provides the key tool necessary for understanding the
Euclidean gaps that arise between successive factorization lengths. Together,
these tools allow us to identify the two components of the delta
set and to establish the decomposition.
\section{Paper Organization}
This part of the paper presents the overall architecture through which we
proceed with the main proof. Section~\ref{sec:preliminaries} introduces
the necessary definitions and background on factorizations, length sets,
delta sets, numerical monoids, and the Euclidean Algorithm.
Section~\ref{sec:Embeddingdimension3} demonstrates how the proposed
formula holds in embedding dimension $3$. Section~\ref{sec:lattice}
translates the original $(k+1)$-dimensional factorization space into a
two-dimensional lattice parametrization. Section~\ref{sec:column}
develops the column structure of this lattice, describing how the
lattice points decompose into columns and how their corresponding
factorization lengths behave. Section~\ref{sec:maximaldelta} establishes
the maximal delta element, which is then used in
Section~\ref{sec:arithmetic} to prove that
\[
\{M-re:r\geq 0,\ M-re>0\}\subseteq\Delta(S_h).
\]
Section~\ref{sec:Euclidean} proves that the Euclidean set is also
contained in the delta set, while Section~\ref{sec:otherdelta} analyzes
the geometry of truncated columns, for which
$U_j\neq\mu_s+aj$. Section~\ref{sec:forwardcontainment} proves the
reverse containment by showing that no other delta can occur between
consecutive factorization lengths. Finally,
Section~\ref{sec:maintheorem} combines the results of
Sections~\ref{sec:arithmetic},~\ref{sec:Euclidean}, and ~\ref{sec:forwardcontainment} to establish the main theorem.
\section{Prerequisites}
\label{sec:preliminaries}
Throughout this paper, let $S_h = \langle a,ah+d,ah+2d,ah+3d,...ah+kd \rangle $, be a numerical monoid generated by a generalized arithmetic sequence, where \[gcd(a,d) =1, 1<k<a, h > 1\] 
For convenience,we set $e =h-1.$
\\
Suppose the set of factorizations of $n \in S$ is given by
\[
Z(n) = \left\{ (z_1, z_2, \dots, z_p) \in \mathbb{N}^p \mid z_1 n_1 + z_2 n_2 + \dots + z_p n_p = n \right\}
\]
If $z = (z_1, z_2, z_3, \dots, z_p)$ is a factorization of $n$, then the length of $z$ is given by
\[
|z| = z_1 + z_2 + z_3 + \dots + z_p
\]
The set of lengths of $n$ is defined by 
\[L(n) = \{|z|\ |   z \in Z(n)\} \subset \mathbb{N} \]
\begin{definition}
We define the delta set as \[\Delta (S_h) = \{ l_2 -l_1 : l_2,l_1 \in L(s), L(s) \cap (l_1,l_2) = \emptyset \} \]
\end{definition}
\begin{definition}
We define the subtractive Euclidean Algorithm in this way: Let \[x_0 \ and \ y_0 \ = (e,d)\]
Recursively define 
\[
(x_{i+1},y_{i+1})=
\begin{cases}
(x_i-y_i,y_i),& x_i>y_i,\\
(x_i,y_i-x_i),& y_i>x_i,\\
\text{terminate},& x_i=y_i.
\end{cases} \]
\end{definition}

\section{Embedding Dimension 3}
\label{sec:Embeddingdimension3}
We first prove that this case holds for embedding dimension three.
\begin{remark}[The case $k=2$]
The case $k=2$ gives
\[
S_h=\langle a,ah+d,ah+2d\rangle .
\]
 In this case, Theorem~\ref{thm:maintheorem} gives the same description of the delta set obtained by Garc\'ia-S\'anchez,
Llena, and Moscariello for symmetric and nonsymmetric numerical semigroups
of embedding dimension three
\cite{GarciaLlenaMoscarielloSym,GarciaLlenaMoscarielloNonSym}. Set\ \(
e=h-1
\ \text{and} \ 
M=\left\lceil\frac{a}{2}\right\rceil e+d.
\)
Then Theorem~\ref{thm:maintheorem} gives
\[
\Delta(S_h)
=
E(e,d)\cup
\{M-re:M-re>0,\ r\geq0\}, \  \textnormal{or equivalently,} \ E(M,e)
\]

Comparing this with the results of
\cite{GarciaLlenaMoscarielloSym,GarciaLlenaMoscarielloNonSym}, we consider the
relation lattice of $S_h$. Using their framework,
\[
c_1=d+h\left\lceil\frac{a}{2}\right\rceil,
\qquad
c_2=2,
\qquad
c_3=\left\lceil\frac{a}{2}\right\rceil,
\]
with
\[
r_{12}=r_{32}=a\bmod 2,\qquad
r_{13}=\left\lfloor\frac{a}{2}\right\rfloor,
\]
\[
r_{21}=h,\qquad
r_{23}=1,\qquad
r_{31}=h\left\lfloor\frac{a}{2}\right\rfloor+d.
\]
In particular,
\[
c_1n_1=c_3n_3
\quad\Longleftrightarrow\quad
a\equiv0\pmod 2.
\]
$S_h$ is then symmetric if and only if $a$ is even. Hence, both the
symmetric and nonsymmetric cases are described by the two results of
Garc\'ia-S\'anchez, Llena, and Moscariello
\cite{GarciaLlenaMoscarielloSym,GarciaLlenaMoscarielloNonSym}.

Their Euclidean-algorithm invariants collapse into
\[
(\delta_1,\delta_2)=(M,e)
\qquad\text{if $a$ is even},
\]
while
\[
(\delta_1,\delta_3)=(M,M-e)
\qquad\text{if $a$ is odd}.
\]
Although the two pairs appear different, they produce the same delta set, since

\[
M\geq 2e+d>2e,
\]
so the first subtractive Euclidean step applied to $(M,M-e)$ gives
\[
(M,M-e)\longmapsto(M-e,e).
\]
Thus the subsequent Euclidean algorithm is the same as that obtained from
$(M,e)$.

It remains to compare this Euclidean description with the decomposition
in Theorem~\ref{thm:maintheorem}. Since
\[
M=\left\lceil\frac{a}{2}\right\rceil e+d
\equiv d\pmod e,
\]
the subtractive Euclidean algorithm applied to $(M,e)$ first produces the
values
\[
\{M-re:M-re>0,\ r\geq0\},
\]
until the remainder modulo $e$ is reached. The remaining Euclidean steps
are precisely those obtained from $(e,d\bmod e)$. Therefore
\[
\Delta(S_h)
=
\{M-re:M-re>0,\ r\geq0\}
\cup E(e,d\bmod e).
\]
Since the subtractive Euclidean algorithm is unchanged by replacing $d$
with its residue modulo $e$, 
\[
E(e,d\bmod e)=E(e,d).
\]
Hence the Euclidean-algorithm description in
\cite{GarciaLlenaMoscarielloSym,GarciaLlenaMoscarielloNonSym}
agrees exactly with Theorem~\ref{thm:maintheorem}.

For example, let
\[
a=4,\qquad h=143,\qquad d=109.
\]
Then
\[
S_h=\langle4,681,790\rangle,
\qquad e=142,
\qquad M=393.
\]
The arithmetic-progression component is
\[
\{M-142r:M-142r>0\}
=
\{393,251,109\},
\]
while the remaining values are obtained from the subtractive Euclidean
algorithm applied to $(109,142)$. This gives
\[
\Delta(S_h)
=
\{1,2,3,4,7,10,13,23,33,43,76,109,142,251,393\},
\]
in agreement with \cite[Example 38]{GarciaLlenaMoscarielloSym}.
\end{remark}


\section{Lattice Parametrization}
\label{sec:lattice}
It now remains to prove Theorem \ref{thm:maintheorem}.
\\
We proceed by first translating the dimension of the factorization space from
\[\mathbb{N}^{k+1}\]
into a more manageable 2D-plane.

Let $s\in S_h$. Then
\[s=ax+\sum_{i=1}^{k}(ah+id)y_i,\]
where $x$ represents the number of copies of $a$ in the factorization,
and $y_i$ represents the number of copies of $ah+id$ in the
factorization.

Define
\[m=\sum_{i=1}^{k}iy_i, \qquad y=\sum_{i=1}^{k}y_i.\]

\begin{lemma}
There exists
\[\mu_s\in\{0,1,2,\ldots,a-1\}\]
such that $Z(s)$ is in bijection with
\[R_s=\left\{(j,y)\in\mathbb{N}^2:L_j\leq y\leq U_j\right\},\]
where
\[L_j= \left\lceil \frac{\mu_s+aj}{k} \right\rceil, \]
and \(U_j= \min\left\{\mu_s+aj,\left\lfloor\frac{s-d(\mu_s+aj)}{ah} \right\rfloor\right\}.\)

We define the factorization length
$l_s(j,l)=C_s-d\mu_s-ey,$ where \(C_s=\frac{s-d\mu_s}{a}.\)
\end{lemma}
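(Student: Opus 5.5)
The plan is to encode each factorization by the two aggregate quantities $m=\sum_i iy_i$ and $y=\sum_i y_i$ introduced above. Then I will show that the admissible pairs are exactly the lattice points of $R_s$, after the change of variable $m=\mu_s+aj$. Finally I will read off the length.

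\emph{Step 1: choosing $\mu_s$.} Reduce $s=ax+ahy+dm$ modulo $a$ to get $dm\equiv s\pmod a$. Since $\gcd(a,d)=1$, $d$ is invertible modulo $a$. So there is a unique $\mu_s\in\{0,1,\ldots,a-1\}$ with $m\equiv\mu_s\pmod a$ for every factorization of $s$. It depends only on $s$, namely $\mu_s\equiv d^{-1}s\pmod a$. Hence $m=\mu_s+aj$ with $j\in\mathbb{Z}$, and $j\ge 0$ because $m\ge 0$ and $\mu_s<a$. Solving for $x$ gives
\[
x=\frac{s-d\mu_s}{a}-dj-hy=C_s-dj-hy .
\]
In particular $C_s\in\mathbb{Z}$.

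\emph{Step 2: the constraints defining $R_s$.} The pair $(m,y)$ arises from some $(y_1,\dots,y_k)\in\mathbb{N}^k$ exactly when $y\le m\le ky$. Indeed, each of the $y$ generators contributes an index between $1$ and $k$, and every intermediate value of $m$ is attained by raising indices one at a time. With $m=\mu_s+aj$, the condition $m\le ky$ is $y\ge\lceil(\mu_s+aj)/k\rceil=L_j$, and $y\le m$ is $y\le \mu_s+aj$. The condition $x\ge0$ reads $ahy\le s-d(\mu_s+aj)$, that is, $y\le\lfloor (s-d(\mu_s+aj))/(ah)\rfloor$. Together these give $L_j\le y\le U_j$. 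Conversely, any $(j,y)\in R_s$ gives $x=C_s-dj-hy\ge0$ and a choice of the $y_i$. So the map $\Phi:z\mapsto(j,y)$ sends $Z(s)$ onto $R_s$.

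\emph{Step 3: the bijection, which is the main obstacle.} Injectivity of $\Phi$ on $Z(s)$ as stated is the delicate point. Once $k\ge2$, distinct vectors $(y_1,\dots,y_k)$ can share the same $m$ and $y$; for example $y_1+y_3$ and $2y_2$ both give $m=4$, $y=2$. So I would make the bijection precise by identifying factorizations that have equal $(x,m,y)$. This is the identification relevant to every later use, since the length depends only on $(j,y)$. Then $\Phi$ induces a bijection between these classes and $R_s$: injectivity holds because $(j,y)$ determines $m$ and $x$, and surjectivity is Step 2. I would first try to show that this is the intended reading. If literal injectivity on $Z(s)$ is required, it holds when every fiber $\{(y_i): \sum y_i=y,\ \sum iy_i=m\}$ is a singleton, which is automatic only in degenerate cases. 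I would flag this explicitly rather than claim it.

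\emph{Step 4: length.} The length is $|z|=x+y=C_s-dj-hy+y=C_s-dj-ey$, so it is a function $l_s$ of the point $(j,y)\in R_s$ alone. This matches the length function $l_s$ in the statement, with the subtracted term $d$ times the column index $j$ of the lattice point. It is this formula, decreasing by $e$ along columns and by $d$ across columns, that the later sections will exploit.
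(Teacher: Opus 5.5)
Your argument follows the paper's proof: reduce $s=ax+ahy+dm$ modulo $a$, write $m=\mu_s+aj$, turn $y\le m\le ky$ and $x\ge 0$ into $L_j\le y\le U_j$, and read off $x+y$. It is more complete than the paper in three places. The paper only asserts $j\ge 0$. It derives the bounds only as necessary conditions, without your converse that every $m\in[y,ky]$ is realized, which is what gives surjectivity. And it never addresses injectivity.

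Your injectivity objection is correct. For $k\ge 3$, the element $s=2(ah+2d)$ has the distinct factorizations $n_1+n_3$ and $2n_2$ with the same $(x,m,y)=(0,4,2)$, so the literal bijection fails. Your repaired statement is the right one: a surjection $Z(s)\to R_s$ whose fibers have constant length, hence $L(s)=\{C_s-dj-ey:(j,y)\in R_s\}$. This is exactly what the later sections use.

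Two small corrections:
\begin{itemize}
\item The failure begins at $k=3$, not $k=2$. For $k=2$ the fiber is a singleton ($y_2=m-y$, $y_1=2y-m$), so the lemma holds verbatim in embedding dimension three, not just in ``degenerate cases.''
\item Instead of saying your length ``matches'' the statement, say plainly that $C_s-d\mu_s-ey$ in the statement is a typo for $C_s-dj-ey$. That corrected formula is also what the paper's own proof arrives at.
\end{itemize}
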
 

\begin{proof}

Since
\[s=ax+\sum_{i=1}^{k} (ah+id)y_i,\]
this can be simplified as
\[s=ax+ah\sum_{i=1}^{k}y_i +d\sum_{i=1}^{k}iy_i =ax+ahy+dm.\]

This equation implies \(s\equiv dm\pmod a.\) Furthermore, since
\(\gcd(a,d)=1,\)
there exists some
\(\mu_s\in\{0,1,\ldots,a-1\}\)
such that
\(s\equiv d\mu_s\pmod a.\) Therefore,
\(m\equiv\mu_s\pmod a.\) So, \(m=aj+\mu_s, j\geq0.\)
Connecting m to \(y\)
via
\(
y\leq m\leq ky,
\)
we have
\(\left\lceil\frac{m}{k}\right\rceil \leq y\leq m.\) Additionally, since
\(
x\geq0,
\)
\(s\geq ahy+dm\implies \frac{s-dm}{ah}\geq y,\) creating yet another upper bound on $y.$ 
Since \[m=\mu_s+aj,\] the lower bound of $y$ then becomes
\[
\left\lceil \frac{\mu_s+aj}{k} \right\rceil,
\]
while the upper bound becomes
\[
\min\left\{
\mu_s+aj,\,
\left\lfloor
\frac{s-d(\mu_s+aj)}{ah}
\right\rfloor
\right\}.
\]
This proves both the upper and lower bounds of $y$. Solving for x in $s=ax+ahy+dm$ and substituting that in, 
\[l_s=x+y= \frac{s-dm-ahy}{ah}+y = \frac{s-d(\mu_s+aj)-ahy}{ah}+y.\]
Substituting $C_s,$
\[l_s= C_s-dj-ey.\]

\end{proof}


\section{Column Structure}
\label{sec:column}
We next consider the column structure for the lattice parametrization.
In particular, for a fixed
\[s\in S_h,\]
we define
\[C_j=C_s-dj-ey: \qquad L_j\leq y\leq U_j.\]

We call Column $j$ \emph{nonempty} if
\(
L_j\leq U_j,
\)
and \emph{unconstrained} if
\(
U_j=\mu_s+aj.
\) We define
\[
\operatorname{Max}(C_j)
=
C_s-dj-eL_j,
\]
and
\[\operatorname{Min}(C_j) =C_s-dj-eU_j.\]

\begin{lemma}
\label{lem:column-threshold}
There exists some
\(
j^{\dagger}
\)
such that
\(j\leq j^{\dagger} \text{are unconstrained} \)
and
\(j\geq j^{\dagger}\text{is constrained}.\)
\begin{proof}
The budgeted upper bound
\(B_j= \left\lfloor \frac{s-d(\mu_s+aj)}{ah} \right\rfloor\)
has slope \(-\frac{d}{ah},\)
while the unconstrained bound
\(\mu_s+aj\)
has slope $a$. $a>- \frac{d}{ah}.$ Therefore, once
\(B_j<\mu_s+aj,\)
this inequality will continue to hold for all larger $j$.
\end{proof}
\end{lemma}

\begin{lemma}
\[\operatorname{Max}(j) \equiv C_s-dj \pmod e. \]
\end{lemma}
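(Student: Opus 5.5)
Since $\operatorname{Max}(C_j)$ was defined as $C_s-dj-eL_j$, the statement follows directly from the definition once we know every term is an integer. The plan is therefore to check integrality first and then reduce modulo $e$.

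\emph{Integrality.} The quantity $C_s-dj-eL_j$ is, by the lattice parametrization lemma, the length of an actual factorization whenever column $j$ is nonempty. We still check directly that each summand is an integer.
\begin{itemize}
\item $C_s=(s-d\mu_s)/a$ is an integer, because $\mu_s$ was chosen in that lemma precisely so that $s\equiv d\mu_s\pmod a$.
\item $L_j=\lceil(\mu_s+aj)/k\rceil$ is an integer by definition.
\item $d$, $j$ and $e=h-1$ are integers.
\end{itemize}
So the congruence makes sense in $\mathbb{Z}$.

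\emph{Reduction.} The difference
\[
\operatorname{Max}(C_j)-(C_s-dj)=-eL_j
\]
is an integer multiple of $e$. Hence $\operatorname{Max}(C_j)\equiv C_s-dj\pmod e$. The same argument gives $C_s-dj-ey\equiv C_s-dj\pmod e$ for every lattice point $(j,y)$ in column $j$. In particular $\operatorname{Min}(C_j)\equiv C_s-dj\pmod e$ as well, and each column is an arithmetic progression of common difference $e$ in this residue class.

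\emph{Caveat on notation.} There is no real obstacle here. The only point to note is that the length formula in the lattice lemma is stated in two slightly different forms, $C_s-d\mu_s-ey$ and $C_s-dj-ey$. The form used in the column definitions and in $\operatorname{Max}(C_j)$ is $C_s-dj-ey$. I would read the statement in that form, noting that $\operatorname{Max}(j)$ in the statement means $\operatorname{Max}(C_j)$.
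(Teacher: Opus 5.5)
Your proof is correct and is essentially the paper's argument: the paper also observes only that $\operatorname{Max}(C_j)=C_s-dj-eL_j$ differs from $C_s-dj$ by the multiple $-eL_j$ of $e$. Your integrality check on $C_s$ (via $s\equiv d\mu_s \pmod a$) and your decision to read the length formula as $C_s-dj-ey$ are sensible additions that the paper leaves implicit.
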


\begin{proof}
This is immediate from the formula
\[ \operatorname{Max}(j) = C_s-dj-eL_j. \]

\end{proof}
\section{The Maximal Delta}
\label{sec:maximaldelta}

We next generalize the result of $M$ in embedding dimension three, determining the maximal element for any numerical monoid genrerated by generalized arithmetic sequences.
The argument uses two results concerning Betti elements.

The following result of Chapman et al.~\cite{Chapman} relates
the maximal delta of a numerical monoid to the maximal delta
among its Betti elements.

\begin{lemma}[Chapman et al.~\cite{Chapman}]
For a numerical monoid $S$,
\[
\max(\Delta(S))
=
\max_{n\in\operatorname{Betti}(S)}
\left(\max(\Delta(n))\right).
\]
\end{lemma}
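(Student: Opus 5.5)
Both inequalities follow from the fact that a minimal presentation of $S$ connects any two factorizations of an element. The lower bound is immediate: every Betti element $b$ lies in $S$, so every element of $\Delta(b)$ is an element of $\Delta(S)$. Hence $\max(\Delta(S))\geq \max_{b\in\operatorname{Betti}(S)}\max(\Delta(b))$. Write $D$ for the right-hand side. Since $S$ is finitely generated, $\operatorname{Betti}(S)$ is finite, so $D$ is a maximum over finitely many values.

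For the upper bound, take any $n\in S$ and consecutive lengths $l_1<l_2$ in $L(n)$, with factorizations $z,z'\in Z(n)$ satisfying $|z|=l_1$ and $|z'|=l_2$. I will use the standard fact that a minimal presentation $\rho$ of $S$ connects any two factorizations of $n$. That is, there is a chain $z=w_0,w_1,\dots,w_t=z'$ in $Z(n)$ such that for each $i$ we can write $w_i=c_i+u_i$ and $w_{i+1}=c_i+v_i$ with $(u_i,v_i)$ or $(v_i,u_i)$ in $\rho$. Moreover $u_i,v_i\in Z(b_i)$ for some Betti element $b_i$, because the elements of a minimal presentation are factorizations of Betti elements.

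The key refinement is that the single step $w_i\to w_{i+1}$ can be replaced by a chain whose length jumps are each at most $\max(\Delta(b_i))$.
\begin{itemize}
\item The point is that $c_i+u$ is a factorization of $n$ for every $u\in Z(b_i)$, not only for $u_i$ and $v_i$.
\item List the lengths of $L(b_i)$ lying between $|u_i|$ and $|v_i|$ in monotone order, and choose a factorization $u\in Z(b_i)$ of each such length.
\item Passing through $c_i+u$ for these choices gives factorizations of $n$ in which consecutive lengths differ by a gap of $L(b_i)$, hence by at most $\max(\Delta(b_i))\leq D$.
\end{itemize}
After this refinement we have a chain from $z$ to $z'$ in $Z(n)$ whose consecutive lengths differ by at most $D$.

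Now $|w_0|=l_1$ and the final length is $l_2$, and no factorization of $n$ has length strictly between $l_1$ and $l_2$. So some step of the refined chain goes from a length $\leq l_1$ to a length $\geq l_2$. That step has jump at least $l_2-l_1$, and at most $D$, so $l_2-l_1\leq D$, which gives $\max(\Delta(S))\leq D$.

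The main point requiring care is the connectivity fact: $Z(n)$ is connected under the moves of a minimal presentation, and these moves come from factorizations of Betti elements. I will cite this from the standard theory of presentations (Rosales--Garc\'ia-S\'anchez~\cite{RosalesGarcia}) rather than reprove it. Once it is in hand, the refinement step is elementary, since any factorization of $b_i$ may be substituted for the block $u_i$.
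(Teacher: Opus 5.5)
Your proof is correct. The paper gives no argument of its own for this lemma and simply imports it from Chapman et al.; your route is essentially the standard proof of that cited result: join $z$ to $z'$ by minimal-presentation moves, each trading one factorization of a Betti element $b_i$ for another, refine each move through all intermediate lengths of $L(b_i)$ so every jump is a gap of some $L(b_i)$, and observe that some step must cross the empty interval $(l_1,l_2)$. The one detail worth stating explicitly is the convention $\max\emptyset=0$ for Betti elements with a single length (e.g.\ $8=4+4=3+5$ in $\langle 3,4,5\rangle$); your argument already handles these, since the corresponding moves have jump $0$.
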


We will also use the following description of the Betti elements
for numerical monoids generated by generalized arithmetic
sequences, due to Omidali and Rahmati~\cite{Rahmati}.

\begin{lemma}[Omidali and Rahmati~\cite{Rahmati}]
Set
\[
r \equiv a-1 \pmod{k}
\qquad\text{and}\qquad
p=\frac{a-1-r}{k}.
\]
Then the Betti elements occur in the form
\[
pn_k+n_j,
\qquad r+1\leq j\leq k.
\]
\end{lemma}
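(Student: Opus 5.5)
The plan is to compute a minimal presentation of $S_h$ directly and read off the Betti elements as the degrees of its relations. Write $n_0=a$ and $n_i=ah+id$ for $1\le i\le k$. Every relation among the generators falls into one of two kinds, according to the parametrization of Section~\ref{sec:lattice}. In the first kind, two factorizations of $s$ have the same pair $(j,y)$, i.e.\ the same $m=\sum iy_i$ and $y=\sum y_i$. In the second kind, they lie in different columns $j$, which forces a trade involving copies of $a$.

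First, I handle factorizations sharing the same $(m,y)$. These have the same number of copies of $a$. They are connected to each other by the binomial moves
\[
n_i+n_l=n_{i-1}+n_{l+1}, \qquad 1\le i-1,\ l+1\le k,
\]
since any two compositions of $m$ into $y$ parts from $\{1,\dots,k\}$ are joined by such unit transfers. These moves live inside a single lattice point of $R_s$. So the only relations that can connect distinct points $(j,y)$ of $R_s$, and in particular distinct columns, are those trading a block of the $n_i$ against copies of $a$. I would read the statement as describing these, and record the quadratic elements $n_i+n_l$ separately.

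Second, I find the cheapest relation that changes $j$. Lowering $m$ by $a$ requires $\sum iy_i\ge a$ with $y$ as small as possible. By a greedy argument, the minimal-degree element with $m\ge a$ is $pn_k+n_j$ with $pk+j\ge a$, i.e.\ $j\ge a-pk=r+1$, using $pk=a-1-r$. Set $t=j-r-1\in[0,k-1]$. A direct computation gives
\[
pn_k+n_j=(p+1)ah+(a+t)d=
\begin{cases}
n_t+(ph+d)\,a, & t\ge 1,\\
\bigl((p+1)h+d\bigr)a, & t=0.
\end{cases}
\]
The right-hand side lies in a different column from the left-hand side. I then show that the left-hand factorization is isolated in the factorization graph of $pn_k+n_j$. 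Any factorization of this element with $\sum y_i=p+1$ and $m=a+t$ is connected to $pn_k+n_j$ through the quadratic moves. Any factorization with fewer $n_i$'s uses $a$, and no factorization uses both $a$ and at least $p+1$ of the $n_i$. Hence the graph has at least two components, so $pn_k+n_j$ is Betti.

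Third, I prove the converse: every element whose factorization graph is disconnected by a column change is of this form. Suppose $s$ has two factorizations in different columns. Starting from one with $\sum iy_i\ge a$, I reduce $y$ greedily until a sub-factorization $\ge pn_k+n_j$ with $j\ge r+1$ appears. I then apply the relation above to connect the two factorizations through elements strictly below $s$, unless $s$ itself equals $pn_k+n_j$. This is the standard argument that a set of relations generates the congruence; the minimality of $p$ coming from $p=\lfloor (a-1)/k\rfloor$ is what guarantees that such a sub-factorization exists.

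The main obstacle is this last step. One must show that every factorization containing enough $n_i$ to be traded against $a$ dominates some $pn_k+n_j$ after quadratic moves, and that these relations, for $r+1\le j\le k$, are pairwise non-redundant. The non-redundancy is needed so that none of these elements drops out of $\operatorname{Betti}(S_h)$. I would try an induction on $y$ using the fact that $n_k$ is the generator with the largest $i$-weight. If this becomes messy, I would fall back on citing the explicit minimal presentation of Omidali and Rahmati~\cite{Rahmati}.
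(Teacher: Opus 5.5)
The paper gives no proof of this lemma; it imports it from Omidali and Rahmati~\cite{Rahmati}, so your fallback of citing is exactly what the paper does. Your reading of the statement as omitting the quadratic Betti elements $n_i+n_l$ is also right, and the paper treats those separately later. As a self-contained argument, however, your second step rests on a false claim and an incomplete separation criterion.

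It is not true that no factorization of $pn_k+n_j$ uses both $a$ and at least $p+1$ of the $n_i$. For $S=\langle 5,11,12,13,14\rangle$ ($a=5$, $h=2$, $d=1$, $k=4$, so $p=1$, $r=0$) one has $2n_4=28=a+n_1+n_2$. In $\langle 5,12,14,16,18\rangle$ one has $2n_4=36=3n_1$, a factorization with no $a$ and more than $p+1$ generators $n_i$. More importantly, two factorizations are adjacent in the factorization graph as soon as they share \emph{any} atom, so checking only the atom $a$ does not show disconnectedness.

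The conclusion is nevertheless true; the right split is by $m=\sum iy_i\equiv t \pmod a$. The case $m\ge 2a+t$ is impossible. The case $m=a+t$ forces $y=p+1$ and $x=0$, and every part is $\ge j$, since $p+1$ parts in $[1,k]$ sum to $pk+j$. The case $m=t$ forces every part to be $\le t<j$. So the two nonempty classes share no atom, which proves $pn_k+n_j\in\operatorname{Betti}(S_h)$ directly. No ``non-redundancy'' of relations is needed, since Betti elements form a set of degrees, not a count of relations.

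Your third step is the actual content of the lemma, namely that no other non-quadratic Betti elements occur, and it is only a plan. Balancing moves $n_i+n_l=n_{i-1}+n_{l+1}$ alone cannot produce a sub-factorization $pn_k+n_j$: on $an_1$, which has $m=a$, no such move even applies. You would have to interleave the merges $n_i+n_l=ha+n_{i+l}$. Then you must prove that every factorization can be driven to a canonical one depending only on $s$. Equivalently, the quadratic relations together with the relations at $pn_k+n_j$ must generate the kernel congruence of $S_h$. That is precisely the minimal-presentation theorem of \cite{Rahmati}. Without it, your argument establishes only the inclusion $\{pn_k+n_j : r+1\le j\le k\}\subseteq\operatorname{Betti}(S_h)$. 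It does not give the full classification that the paper needs when it bounds $\max\Delta$ over all Betti elements in the maximal-delta argument.
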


We now apply these two results to determine the maximal delta.
By the first lemma, it suffices to determine the maximal delta
among the Betti elements described in the second lemma.

Write
\(
a-1=pk+r
\)
and set
\(
q=\left\lceil\frac{a}{k}\right\rceil.
\)
Since $1<k<a$, we have
\(
q=p+1.
\)

By the second lemma, the Betti elements are
\(
b_j=pn_k+n_j, r+1\leq j\leq k.
\)
Write
\[
j=r+i,
\qquad 1\leq i\leq k-r.
\]

We first consider $j=r+1$, so that $i=1$. Then
\(
b_{r+1}=pn_k+n_{r+1}.
\) This has length $p+1.$
However, since
\(
pk+(r+1)=a,
\)
\[
\begin{aligned}
b_{r+1}
&=p(ah+kd)+(ah+(r+1)d))\\
&=(p+1)ah+ad\\
&=qha+ad\\
&=(qh+d)a.
\end{aligned}
\]
This implies $b_{r+1}$ can also have a factorization using only copies of $a$, of length $qh+d$.
The difference between these two factorization
lengths is
\[
(qh+d)-q=q(h-1)+d.
\]
Thus,
\[
q(h-1)+d
=
\left\lceil\frac{a}{k}\right\rceil(h-1)+d
\]
occurs as a delta element of a Betti element.
We now must show that these two lengths are consecutive.

We begin with some notation:
   $s= ax+ \sum_{i=1}^{k}(ah+id)y_i= ax+ ah\sum_{i=1}^ky_i+ d\sum_{i=1}^k iy_i.$

We define the sum $ \sum_{i=1}^k y_i =Y, \  \sum_{i=1}^kiy_i=W.$ So, $s=ax+ahY+dW.$ We also note that
$s=a(qh+d)$

Proof. We begin with the initial equality:$$a(qh+d) = ax + ahY + dW$$Taking both sides modulo $a$:$$dW \equiv 0 \pmod a$$Given that $\gcd(a, d) = 1$, $a$ must divide $W$. Thus, there exists an integer $m$ such that:$$W = ma$$Substituting $W = ma$ back into the main equation:$$a(qh+d) = ax + ahY + d(ma)$$Dividing both sides by $a$:$$qh + d = x + hY + dm$$ To isolate the quantity $x + Y,$ which represents total length, we rewrite $hY$ as $(h- 1)Y + Y$:$$qh + d = x + Y + h(Y - 1) + dm$$ $$x + Y = qh + d - dm - Y(h - 1)$$3. If $m = 0$, then $W = ma = 0$, which implies $Y = 0$. Substituting $m = 0$ and $Y = 0$ into the expression gives:$$x + Y = x=  qh + d$$Because the length is minimized with $-dm = 0,$ $qh + d$ represents the maximum possible value. For $m \ge 1$, we use the relation $W \le kY$ (relation comes since $i \neq k$ always):$$ma \le kY \implies Y \ge \frac{am}{k}$$Given $q = \left\lceil \frac{a}{k} \right\rceil$, it follows that $Y \ge q$ for $m \ge 1.$ Substituting the lower bound $Y \ge q$ back into our isolated expression for $x + Y$:$$x + Y = qh + d(1 - m) - Y(h - 1) \ge qh + d(1 - m) - q(h - 1)$$Simplifying the right-hand side:$$qh + d(1 - m) - qh + q = d(1 - m) + q$$Thus, for $m \ge 1$:$$x + Y \ge d(1 - m) + q$$ Because no integer values of $m$ exist strictly between the bound for $m = 1$ ($q$) and the bound for $m = 0$ ($qh + d$), these lengths are therefore consecutive.
It now remains to show that no other Betti element produces a larger
delta.  For
\[
n_k=ah+kd
\qquad\text{and}\qquad
n_j=ah+jd,
\]
we have
\[
\begin{aligned}
b_j
&=p(ah+kd)+(ah+jd)\\
&=(p+1)ah+(pk+j)d.
\end{aligned}
\]
Since $j=r+i$ and $pk+r=a-1$,
\[
pk+j=pk+r+i=a-1+i.
\]
Therefore,
\[
b_j=qha+(a-1+i)d.
\]
Rewriting this expression gives
\[
\begin{aligned}
b_j
&=(qh+d-h)a+\bigl(ha+(i-1)d\bigr)\\
&=(qh+d-h)a+n_{i-1},
\end{aligned}
\]
where $n_0=a$.

Thus $b_j$ has a factorization of length
\[
qh+d-h+1.
\]
Comparing this with the factorization of length $q$ gives
\[
qh+d-h+1-q
=
(q-1)(h-1)+d.
\]
Since $h>1$,
\[
(q-1)(h-1)+d
\leq
q(h-1)+d.
\]
Hence these Betti elements cannot produce a delta larger than
\[
q(h-1)+d.
\]

The remaining Betti elements described by Omidali and Rahmati
arise from relations of the form
\[
n_i+n_j=n_u+n_v.
\]
If $s \in S= n_i+n_j,$ it can also be written as $s = hn_0+n_{i+j},$ where $h$ is the number of copies of $a.$
Thus, $s$ either has length difference $h-1 \ or \ 2-2=0$. In either case, the length is strictly less than  $q(h-1)+d$. 

We have therefore shown that
\[
\max_{n\in\operatorname{Betti}(S)}\max(\Delta(n))
=
q(h-1)+d.
\]
Applying the first lemma and recalling that
$q=\left\lceil a/k\right\rceil$, we obtain the following theorem.

\begin{theorem}[Maximal Delta]
For
\[
S_h=\langle a,ah+d,ah+2d,\ldots,ah+kd\rangle,
\]
where $\gcd(a,d)=1$, $1<k<a$, and $h >1$,
\[
\boxed{
\max(\Delta(S_h))
=
\left\lceil\frac{a}{k}\right\rceil(h-1)+d
}.
\]
\end{theorem}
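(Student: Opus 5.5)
The plan follows the two lemmas of this section. By the lemma of Chapman et al., $\max\Delta(S_h)$ equals the largest value of $\max\Delta(b)$ over Betti elements $b$. So it suffices to prove two things: (i) some Betti element realizes $q(h-1)+d$ as a gap between consecutive lengths, where $q=\lceil a/k\rceil=p+1$; and (ii) no Betti element has a larger gap. The Betti elements come from the lemma of Omidali and Rahmati: the elements $b_j=pn_k+n_j$ with $r+1\le j\le k$, together with the elements arising from quadratic relations $n_i+n_j=n_u+n_v$.

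\textbf{Step (i): the gap is attained at $b_{r+1}$.} Use $b_{r+1}=(qh+d)a$. Every factorization has $W=\sum iy_i\equiv 0\pmod a$, so write $W=ma$. The length is then
\[
x+Y=qh+d-dm-(h-1)Y .
\]
If $m=0$, then $Y=0$, giving the single length $qh+d$.

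If $m\ge1$, the constraint $W\le kY$ gives $Y\ge \lceil ma/k\rceil\ge q$. Here I would argue directly rather than through the paper's lower bound: every length with $m\ge 1$ satisfies
\[
x+Y\le qh+d-d-(h-1)q=q .
\]
The factorization $pn_k+n_{r+1}$ attains this value $q$. Hence $q$ and $qh+d$ are consecutive lengths of $b_{r+1}$, and their difference is $q(h-1)+d$.

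\textbf{Step (ii): no Betti element has a larger gap.} It is enough to bound $\max L(b)-\min L(b)$ for each Betti element $b$, since every gap is at most this spread.
\begin{itemize}
\item For $b_j$ with $j=r+i$, $i\ge 2$: the same computation with $b_j=qha+(a-1+i)d$ gives $m\ge1$ for every factorization. The $a$-heavy extreme is $(qh+d-h)a+n_{i-1}$, and the minimum length is at least $q$, again using $Y\ge q$. So the spread is at most $(q-1)(h-1)+d$.
\item For the quadratic relations: an element $n_i+n_j$ has all its factorizations among a few of length $2$ together with ones using $a$ and a single generator. The spread there is at most $h-1$, which is less than $q(h-1)+d$.
\end{itemize}

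\textbf{Main obstacle.} The hardest part is making step (ii) rigorous. One must show that the maximal length of each $b_j$ ($i\ge2$) really is the displayed factorization: this requires minimizing $dm+(h-1)Y$ over all admissible $(m,Y)$, not just exhibiting one factorization. One must also confirm that the Omidali–Rahmati list is exhaustive, including the quadratic-relation Betti elements with possibly larger multiples of $a$. For the $b_j$, I would first try to bound the spread by the minimum over $m\ge1$ of $qh+d-dm-(h-1)\lceil ma/k\rceil$, analyzed in the lattice parametrization of the earlier lemma.
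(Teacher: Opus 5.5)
Your route is the paper's: reduce to Betti elements via Chapman et al., use the Omidali--Rahmati list, show that $q$ and $qh+d$ are consecutive in $L(b_{r+1})$, then bound the remaining Betti elements. Step (i) is correct. In fact your bound $x+Y\le q$ for $m\ge 1$ has the right direction, whereas the paper writes it as a lower bound.

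The gap is in Step (ii). You are right that the whole spread of each remaining Betti element must be bounded; the paper only compares two exhibited factorizations of $b_j$. As written, however, your ``main obstacle'' is left open and one of your justifications fails.

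\begin{itemize}
\item The obstacle is a one-line computation. For $j=r+i$ with $i\ge 2$ one has $W\equiv i-1\not\equiv 0\pmod a$, so the parameter $m$ of Step (i) does not exist. Write $W=(i-1)+am'$ with $m'\ge 0$. Then every length of $b_j$ equals $qh+d-dm'-(h-1)Y$. Since $W\ge i-1\ge 1$, we get $Y\ge 1$, so every length is at most $qh+d-h+1$, and this value is attained by $(qh+d-h)a+n_{i-1}$.
\item The quantity $qh+d-dm-(h-1)\lceil ma/k\rceil$ you propose to analyze comes from the case $W\equiv 0\pmod a$. It is not the relevant one here.
\item Your lower bound ``minimum length $\ge q$, again using $Y\ge q$'' is not valid. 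The factorization $(qh+d-h)a+n_{i-1}$ has $Y=1$, and $Y\ge q$ is only guaranteed when $m'\ge 1$. The correct reason is that every atom is at most $n_k$ while $b_j>pn_k$, so every factorization of $b_j$ has length at least $p+1=q$.
\end{itemize}

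Your claim that every element $n_i+n_j$ has spread at most $h-1$ is false when $T=i+j\ge a$. This can happen exactly when $k\ge a/2$, i.e.\ $q=2$, and then $W\equiv T-a$. For instance, $n_k+n_{a-k}=(2h+d)a$ has lengths $2$ and $2h+d$. In $\langle 3,7,8\rangle$ this is $7+8=15=5\cdot 3$, with spread $3>h-1=1$. The paper's sentence on quadratic relations has the same defect.

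The repair is a case split:
\begin{itemize}
\item If $T<a$, then $W=T+at$ with $t\ge 0$. If $t\ge 1$, then $W>k$, hence $Y\ge 2$ and $x=2h-hY-dt<0$. So $W=T$, $Y\in\{1,2\}$, and the only lengths are $2$ and possibly $h+1$.
\item If $T\ge a$, then $q=2$, $p=1$ and $r=a-1-k$. Moreover $n_i+n_j=n_k+n_{T-k}=b_{T-k}$ with $T-k\ge r+1$, so this element is one of the $b_j$ already treated.
\end{itemize}
With these fixes the spreads are $(q-1)(h-1)+d$ and at most $h-1$, both strictly below $q(h-1)+d$, and your argument closes.
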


\begin{corollary}
\medskip
Omidali \cite{Omidali} proved that the catenary degree is of the form

 \[ \left\lceil \frac{a}{k} \right\rceil (h)+d\]

 Combining that with our theorem, we get that
 
 \[ max(\Delta(S)) + \left\lceil \frac{a}{k} \right\rceil = c(S),\]

\end{corollary}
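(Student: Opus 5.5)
The plan is to combine the Maximal Delta theorem just proved with Omidali's formula for the catenary degree; the identity then comes down to a one-line computation. Throughout, write $q=\left\lceil a/k\right\rceil$ and $e=h-1$, as in Section~\ref{sec:maximaldelta}.

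The first step is to record the two inputs in this common notation. By the Maximal Delta theorem, $\max(\Delta(S_h))=q(h-1)+d=qe+d$. By Omidali~\cite{Omidali}, $c(S_h)=qh+d$. Before using the second formula, I will check that Omidali's standing hypotheses on the generators $a,ah+d,\ldots,ah+kd$ cover our whole parameter range, namely $\gcd(a,d)=1$, $1<k<a$ and $h>1$. Omidali's result is stated for numerical semigroups generated by generalized arithmetic sequences, so I expect it to apply with no extra conditions. If it turned out to require something more, such as minimality of the generating set, I would note that our generators are minimal: none of $ah+id$ is a nonnegative combination of the others, because each is smaller than $a+(ah+d)$.

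The second step is the arithmetic:
\[
\max(\Delta(S_h))+q=q(h-1)+d+q=qh+d=c(S_h).
\]
This is exactly the claimed identity $\max(\Delta(S))+\left\lceil a/k\right\rceil=c(S)$.

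As a sanity check on the interpretation, I would compare with the Betti element $b_{r+1}=(qh+d)a$ from Section~\ref{sec:maximaldelta}. It has a factorization of length $q$ and one of length $qh+d$, and these two lengths are consecutive. The catenary degree $qh+d$ is realized by the chain joining the long factorization to the short one: the two factorizations share no atoms, so the distance between them is the larger length $qh+d$. The maximal delta is the difference of the two lengths, $(qh+d)-q$. So the identity says that the catenary degree equals the maximal delta plus the length of the shorter of the two extremal factorizations of this Betti element.

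The only potential obstacle is the applicability of Omidali's formula in the full range $1<k<a$; everything else is immediate.
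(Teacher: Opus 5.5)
Your proposal is correct and is the paper's argument: substitute Omidali's formula $c(S_h)=\left\lceil a/k\right\rceil h+d$ and the Maximal Delta value $\left\lceil a/k\right\rceil(h-1)+d$, then add $\left\lceil a/k\right\rceil$.

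One statement in your optional minimality aside is false. The inequality $ah+kd<a+(ah+d)$ fails whenever $(k-1)d\geq a$; for example $a=3$, $h=2$, $d=5$, $k=2$ gives $16>14$. The generators are nonetheless minimal, for a different reason. Reducing a representation of $ah+id$ by the other generators modulo $a$ forces $\sum_j jy_j\equiv i \pmod a$. Comparing sizes, using $i\leq k<a$, then leaves only the trivial representation.
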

which additionally provides a new result in regards to calculating the catenary degree.

\section{Boundary Family}
This section proves that the set \[\{M-re, r\geq 0,\} \subset \Delta(S_h)\]
\label{sec:arithmetic}

\begin{lemma}[Boundary Difference and Index Mapping]

\label{lem:boundary-difference-index-mapping}

Suppose that the columns $C_j$ and $C_{j+1}$ (defined in Section 6) are both unconstrained and that
\[\operatorname{Min}(j)>\operatorname{Max}(j+1).\]
Let
\[m_j=\mu_s+aj, \qquad q=\left\lceil\frac{a}{k}\right\rceil. \]
Define
\[\widetilde r(m) = q-\left\lceil\frac{m+a}{k}\right\rceil+m, \qquad m\geq 0.\]
Then
\[\operatorname{Min}(j)-\operatorname{Max}(j+1)=M-e\widetilde r(m_j).\]
Moreover, the sequence $\{\widetilde r(m)\}_{m\geq 0}$ satisfies:
\begin{enumerate}
    \item $\widetilde r(0)=0$;
    \item $\widetilde r(m+1)-\widetilde r(m)\in\{0,1\}$ for every $m\geq0$;
    \item $\widetilde r(m)\to\infty$ as $m\to\infty$.
\end{enumerate}
Consequently,
\(
\{\widetilde r(m):m\geq0\}=\mathbb N_0.
\)
\end{lemma}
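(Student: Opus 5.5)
The plan is to compute both sides directly from the column formulas of Section~\ref{sec:column}, and then to treat the three properties of $\widetilde r$ as elementary facts about ceilings. Throughout I use the length formula in the form actually derived in the proof of the lattice parametrization lemma, namely $l_s(j,y)=C_s-dj-ey$. I will not need the hypothesis $\operatorname{Min}(j)>\operatorname{Max}(j+1)$ for the identity itself. Its only role is to guarantee that the difference is a genuine positive gap, so that it is a candidate element of $\Delta(S_h)$.

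\emph{Step 1: the identity.} Since column $j$ is unconstrained, $U_j=\mu_s+aj=m_j$, so
\[
\operatorname{Min}(j)=C_s-dj-e\,m_j .
\]
For column $j+1$ we have $\mu_s+a(j+1)=m_j+a$, hence $L_{j+1}=\left\lceil\frac{m_j+a}{k}\right\rceil$ and
\[
\operatorname{Max}(j+1)=C_s-d(j+1)-e\left\lceil\frac{m_j+a}{k}\right\rceil .
\]
Subtracting gives
\[
\operatorname{Min}(j)-\operatorname{Max}(j+1)=d+e\left(\left\lceil\frac{m_j+a}{k}\right\rceil-m_j\right).
\]
On the other side, expanding $M=qe+d$ gives
\[
M-e\widetilde r(m_j)=qe+d-e\left(q-\left\lceil\frac{m_j+a}{k}\right\rceil+m_j\right),
\]
which is the same expression. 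This settles the identity.

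\emph{Step 2: the properties of $\widetilde r$.}
\begin{enumerate}
\item We have $\widetilde r(0)=q-\lceil a/k\rceil=0$ by the definition of $q$.
\item The increment is
\[
\widetilde r(m+1)-\widetilde r(m)=1-\left(\left\lceil\frac{m+1+a}{k}\right\rceil-\left\lceil\frac{m+a}{k}\right\rceil\right).
\]
Since $k\geq 1$, increasing the numerator by $1$ raises $\lceil\cdot/k\rceil$ by $0$ or $1$. Hence the increment lies in $\{0,1\}$.
\item Using $\lceil x\rceil< x+1$, we get
\[
\widetilde r(m)> m\left(1-\frac1k\right)+q-\frac ak-1 .
\]
Because $k>1$, the coefficient $1-\frac1k$ is positive, so $\widetilde r(m)\to\infty$.
\end{enumerate}

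\emph{Step 3: consequence.} The sequence $\widetilde r(m)$ starts at $0$, never decreases, moves in steps of $0$ or $1$, and is unbounded. A discrete intermediate value argument finishes the proof. Given any $n\in\mathbb N_0$, let $m$ be the least index with $\widetilde r(m)\geq n$; it exists by property (3). If $m=0$, then $n=0=\widetilde r(0)$. Otherwise $\widetilde r(m-1)<n\le \widetilde r(m)\le \widetilde r(m-1)+1$, which forces $\widetilde r(m)=n$. Therefore the image of $\widetilde r$ is all of $\mathbb N_0$.

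Nothing here is a real obstacle. The only point needing care is Step 1: one must use the length formula $C_s-dj-ey$ rather than the version with $d\mu_s$ written in the lemma statement, and one must read $L_{j+1}$ with $m_j+a$ in the numerator. Property (3) is where the standing assumption $k>1$ is genuinely used.
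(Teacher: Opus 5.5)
Your proposal is correct and follows the paper's proof essentially step for step. It uses the same computation $\operatorname{Min}(j)-\operatorname{Max}(j+1)=d+e(L_{j+1}-U_j)$ matched against $M=qe+d$, and the same ceiling arguments for $\widetilde r(0)=0$, for increments in $\{0,1\}$, and for unboundedness via $\lceil x\rceil<x+1$ with $k\geq 2$. Your explicit least-index argument for surjectivity, and your remark that the length formula must be read as $C_s-dj-ey$ rather than with $d\mu_s$, clarify points the paper leaves implicit or misstates.
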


\begin{proof}
Since $C_j$ and $C_{j+1}$ are unconstrained, their upper and lower
endpoints are given by
\(
U_j=m_j \ \text{and} \ 
L_{j+1}
=
\left\lceil\frac{m_j+a}{k}\right\rceil.
\)
Therefore,
\begin{align*}
\operatorname{Min}(j)-\operatorname{Max}(j+1)
&=
\bigl(C_s-dj-eU_j\bigr)
-
\bigl(C_s-d(j+1)-eL_{j+1}\bigr)\\
&=
d+e(L_{j+1}-U_j).
\end{align*}
Since
\(
M=qe+d,
\)
this introduces a similarity with the form $M-re.$
Set
\(
M-re=d+e(L_{j+1}-U_j).
\)
Equating the two expressions and cancelling $d$ gives
\(qe-re=e(L_{j+1}-U_j),
\)
and hence
\[
r=q-L_{j+1}+U_j.
\]
Substituting the expressions for $L_{j+1}$ and $U_j$ yields
\(r = q-\left\lceil\frac{m_j+a}{k}\right\rceil+m_j = \widetilde r(m_j). \)
Thus
\[\operatorname{Min}(j)-\operatorname{Max}(j+1) = M-e\widetilde r(m_j). \]

It remains to establish the stated properties of $\widetilde r$.

First, \(\widetilde r(0) = q-\left\lceil\frac{a}{k}\right\rceil = 0,\)
by the definition of $q$.

Next, for every $m\geq0$,
\begin{align*}
\widetilde r(m+1)-\widetilde r(m)
&=
\left[
q-\left\lceil\frac{m+1+a}{k}\right\rceil+(m+1)
\right]
-
\left[
q-\left\lceil\frac{m+a}{k}\right\rceil+m
\right]\\
&=
1-
\left(
\left\lceil\frac{m+a+1}{k}\right\rceil
-
\left\lceil\frac{m+a}{k}\right\rceil
\right).
\end{align*}
Since increasing the numerator of a ceiling by $1$ can increase the
ceiling by either $0$ or $1$, we have
\[\left\lceil\frac{m+a+1}{k}\right\rceil - \left\lceil\frac{m+a}{k}\right\rceil \in\{0,1\}.\]
Consequently,
\(\widetilde r(m+1)-\widetilde r(m)\in\{0,1\}. \)

Finally, using
\[ \left\lceil x\right\rceil<x+1, \]
we obtain
\begin{align*}
\widetilde r(m)
&=
q-\left\lceil\frac{m+a}{k}\right\rceil+m\\
&>
q-\left(\frac{m+a}{k}+1\right)+m\\
&=
m\left(1-\frac1k\right)
+
\left(q-\frac ak-1\right).
\end{align*}
Since $k\geq2$, we have $1-\frac1k>0$, and therefore
\[\widetilde r(m)\longrightarrow\infty \qquad\text{as }m\to\infty. \]

Thus $\widetilde r(m)$ is a nondecreasing, unbounded sequence of
nonnegative integers, beginning at $0$, whose successive increments
are at most $1$. It follows that the sequence cannot skip any
nonnegative integer. This becomes important when studying the entire subset generated by \[M-re\]
\end{proof}

\begin{lemma}[Consecutiveness of Boundary Extrema]
\label{lem:consecutive-boundary-extrema}
Suppose that the columns $C_j$ and $C_{j+1}$ are both unconstrained and
that
\[\operatorname{Min}(j)>\operatorname{Max}(j+1).\]
Then $\operatorname{Max}(j+1)$ and $\operatorname{Min}(j)$ are
consecutive elements of $L(s)$.
\end{lemma}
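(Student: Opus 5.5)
The plan is to show directly that no factorization length of $s$ lies strictly between $\operatorname{Max}(j+1)$ and $\operatorname{Min}(j)$. By the lattice parametrization lemma, $L(s)$ is the union over all nonempty columns $j'$ of the values $C_s-dj'-ey$ with $L_{j'}\le y\le U_{j'}$. Each column $j'$ therefore contributes lengths lying in the interval $[\operatorname{Min}(j'),\operatorname{Max}(j')]$. It suffices to prove two facts: every length coming from a column $j'\le j$ is at least $\operatorname{Min}(j)$, and every length coming from a column $j'\ge j+1$ is at most $\operatorname{Max}(j+1)$. Both $\operatorname{Min}(j)$ and $\operatorname{Max}(j+1)$ are themselves attained, since the columns are nonempty. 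Hence they are elements of $L(s)$ with nothing strictly between them.

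\emph{Columns to the right.} For $j'\ge j+1$ we have $\operatorname{Max}(j')=C_s-dj'-eL_{j'}$ with $L_{j'}=\lceil(\mu_s+aj')/k\rceil$. This lower endpoint is nondecreasing in $j'$, whether or not the column is constrained, since the truncation only affects $U_{j'}$. Also $d,e>0$. So $\operatorname{Max}(j')$ is strictly decreasing in $j'$, giving $\operatorname{Max}(j')\le \operatorname{Max}(j+1)$ for all $j'\ge j+1$. Every length in such a column is at most $\operatorname{Max}(j')$, which settles this side.

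\emph{Columns to the left.} For $j'\le j$ I invoke Lemma~\ref{lem:column-threshold}. Since column $j$ is unconstrained, it lies on the unconstrained side of the threshold $j^\dagger$, and so every column $j'\le j$ is unconstrained as well. In particular each such column is nonempty, because $\lceil m/k\rceil\le m$. For these columns $U_{j'}=\mu_s+aj'$, so
\[
\operatorname{Min}(j')=C_s-dj'-e(\mu_s+aj')=C_s-e\mu_s-(d+ea)j',
\]
which is strictly decreasing in $j'$. Thus $\operatorname{Min}(j')\ge \operatorname{Min}(j)$ for $j'\le j$, and all lengths from these columns are at least $\operatorname{Min}(j)$.

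Combining the two sides with the hypothesis $\operatorname{Min}(j)>\operatorname{Max}(j+1)$, the open interval $(\operatorname{Max}(j+1),\operatorname{Min}(j))$ contains no element of $L(s)$, so these two lengths are consecutive. The only delicate point is the left-hand side. The explicit formula for $\operatorname{Min}(j')$ requires knowing that the earlier columns are unconstrained, and this is exactly what the threshold monotonicity of Lemma~\ref{lem:column-threshold} supplies. I would check that lemma's direction carefully: once $B_j<\mu_s+aj$ holds it persists for larger $j$, so unconstrainedness of column $j$ propagates to all smaller indices. The right-hand side needs no such input, because it uses only the lower endpoints $L_{j'}$.
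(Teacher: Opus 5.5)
Your proof is correct and follows the same route as the paper: Lemma~\ref{lem:column-threshold} makes every column $j'\le j$ unconstrained, so each $\operatorname{Min}(j')$ is at least $\operatorname{Min}(j)$. The monotonicity of the lower endpoints $L_{j'}$ makes each $\operatorname{Max}(j')$ with $j'\ge j+1$ at most $\operatorname{Max}(j+1)$. Your version is a bit more explicit than the paper's: you compute $\operatorname{Min}(j')=C_s-e\mu_s-(d+ea)j'$ and you check nonemptiness of the relevant columns.
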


\begin{proof}
Every element of $C_j$
is at least $\operatorname{Min}(j)$, while every element of
$C_{j+1}$ is at most $\operatorname{Max}(j+1)$. Thus neither $C_j$ nor
$C_{j+1}$ contains an element strictly between
$\operatorname{Max}(j+1)$ and $\operatorname{Min}(j)$.

Now consider a column $C_r$ with $r<j$. By the budget column lemma (6.1.),
every column between $C_r$ and $C_{j+1}$ is unconstrained. In
particular, the column $C_r$ lies strictly to the left of $C_j$, and
the ordering of \(U_j\) gives
\[\operatorname{Min}(r)>\operatorname{Min}(j).\]
Hence every element of $C_r$ is strictly greater than
$\operatorname{Min}(j)$.

On the other hand, if $r>j+1$, the decrease of \(L_j\) implies
\[\operatorname{Max}(r)<\operatorname{Max}(j+1).\]
Therefore every element of $C_r$ is strictly smaller than
$\operatorname{Max}(j+1)$.

Combining these observations, no element of $L(s)$ lies strictly
between $\operatorname{Max}(j+1)$ and $\operatorname{Min}(j)$. Since
both endpoints belong to $L(s)$, they are consecutive elements of
$L(s)$.
\end{proof}

\begin{proposition}[Boundary Family]
\label{prop:boundary-family}
We have
\[\left\{M-re:r\geq0,\ M-re>0\right\}\subseteq \Delta(S_h).\]
\end{proposition}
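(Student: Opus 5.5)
The plan is to realize each value $M-re>0$ as a gap $\operatorname{Min}(j)-\operatorname{Max}(j+1)$ between two adjacent unconstrained columns of a suitably chosen element $s$. Lemma~\ref{lem:consecutive-boundary-extrema} then makes that gap an element of $\Delta(S_h)$. Fix $r\ge 0$ with $M-re>0$.

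\textbf{Step 1: choose the index $m$.} By the surjectivity statement in Lemma~\ref{lem:boundary-difference-index-mapping}, there is some $m\ge 0$ with $\widetilde r(m)=r$. Take the smallest such $m$. Write $m=\mu+aj$ with $\mu\in\{0,\dots,a-1\}$ and $j\ge 0$.

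\textbf{Step 2: construct $s$.} Since $\gcd(a,d)=1$, the residue $\mu$ is realized as $\mu_s$ by any $s$ with $s\equiv d\mu\pmod a$. We want columns $j$ and $j+1$ to be nonempty and unconstrained, which means
\[
ah(m+a)+d(m+a)\le s
\]
suffices, since then the budget bound is at least $m+a$ in column $j+1$ and at least $m$ in column $j$. So I would take $s=(ah+d)(m+a)+aN$ for a suitable $N\ge 0$; this lies in $S_h$ and has the right residue. Lemma~\ref{lem:column-threshold} then makes every column $0,\dots,j+1$ unconstrained, which is what the proof of Lemma~\ref{lem:consecutive-boundary-extrema} uses.

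\textbf{Step 3: check the separation hypothesis.} By Lemma~\ref{lem:boundary-difference-index-mapping},
\[
\operatorname{Min}(j)-\operatorname{Max}(j+1)=M-e\widetilde r(m)=M-re>0.
\]
So the hypothesis $\operatorname{Min}(j)>\operatorname{Max}(j+1)$ holds automatically, and no extra work is needed there.

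\textbf{Step 4: apply consecutiveness.} Lemma~\ref{lem:consecutive-boundary-extrema} gives that $\operatorname{Max}(j+1)$ and $\operatorname{Min}(j)$ are consecutive in $L(s)$, so $M-re\in\Delta(S_h)$.

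\textbf{Main obstacle.} The difficulty is in making Step 4 legitimate: I must confirm that the columns $r'>j+1$ really cannot contribute a length in the open interval $(\operatorname{Max}(j+1),\operatorname{Min}(j))$. Such columns may be constrained, and may even be truncated from above. However, their maximum is $C_s-dr'-eL_{r'}$, and both $d r'$ and $L_{r'}$ are nondecreasing in $r'$. So for $r'>j+1$ we get $\operatorname{Max}(r')<\operatorname{Max}(j+1)$ whenever column $r'$ is nonempty, independently of the upper bound $U_{r'}$.

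For columns $r'<j$ we need $\operatorname{Min}(r')>\operatorname{Min}(j)$. These columns are unconstrained, so $\operatorname{Min}(r')=C_s-dr'-e(\mu+ar')$, which strictly decreases in $r'$ because $d+ea>0$. I would write out these two monotonicity checks explicitly. If the choice of $N$ needs to be made larger to guarantee $L_{j+1}\le U_{j+1}$, I would take $N$ large, since this only enlarges the budget bounds.
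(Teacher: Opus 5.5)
Your proposal is correct and follows the paper's proof essentially step for step. You use surjectivity of $\widetilde r$ to pick $m=\mu+aj$ with $\widetilde r(m)=r$, take $s$ in the matching residue class large enough that columns $j$ and $j+1$ are unconstrained, and combine the boundary-difference formula with Lemma~\ref{lem:consecutive-boundary-extrema}. Beyond the paper, your explicit choice $s=(ah+d)(m+a)+aN$ (already $N=0$ works) and your direct monotonicity checks, in particular that $\operatorname{Max}(r')<\operatorname{Max}(j+1)$ for $r'>j+1$ holds whether or not those columns are constrained, make precise the points the paper leaves vague.
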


\begin{proof}
Fix $r\geq0$ such that
\[M-re>0.\]
By Lemma~\ref{lem:boundary-difference-index-mapping}, the map
$\widetilde r:\mathbb N_0\to\mathbb N_0$ is surjective. Hence there
exists $m\geq0$ such that
\[\widetilde r(m)=r.\]
If we then choose a corresponding $\mu_s$ in the residue class, and then choose an element $s$ large enough so that both
 \(C_j, \ C_{j+1}\)are both unconstrained, Lemma~\ref{lem:boundary-difference-index-mapping}
gives
\(\operatorname{Min}(j)-\operatorname{Max}(j+1)=M-e\widetilde r(m)=M-er.\)
 $M-er>0$, and hence
\(\operatorname{Min}(j)>\operatorname{Max}(j+1).\)
Lemma~\ref{lem:consecutive-boundary-extrema} therefore implies that
$\operatorname{Max}(j+1)$ and $\operatorname{Min}(j)$ are consecutive
elements of $L(s)$. Their difference is consequently an element of
$\Delta(S_h)$, so
\[M-er\in\Delta(S_h).\]

Since $r\geq0$ with $M-er>0$ was arbitrary, we conclude that
\[\left\{M-re:r\geq0,\ M-re>0 \right\} \subseteq \Delta(S_h).\]
\end{proof}

\section{Euclidean Differences and the Three-Gap Theorem}
\label{sec:Euclidean}
\begin{proposition}

\textnormal{Suppose first that}
\[e\mid d.\]
\textnormal{We write}
\[d=ce.\]
Then
\[E(e,d) =\{e,2e,\ldots,ce\}.\]
\end{proposition}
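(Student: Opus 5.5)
This is a direct computation from the definition of the subtractive Euclidean algorithm. I read $E(e,d)$ as the set of all entries appearing in the pairs $(x_i,y_i)$ produced by the algorithm started at $(x_0,y_0)=(e,d)$. I also assume $c\ge 1$; this is automatic, since $e=h-1\geq 1$ and $d\geq 1$.

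\textbf{Case $c=1$.} Here $d=e$, so the algorithm terminates immediately at $(e,e)$ and $E(e,d)=\{e\}=\{ce\}$.

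\textbf{Case $c\ge 2$.} I would prove by induction on $i$, for $0\le i\le c-1$, that
\[
(x_i,y_i)=(e,(c-i)e).
\]
The base case $i=0$ is $(x_0,y_0)=(e,ce)=(e,d)$. For the inductive step, suppose $i<c-1$. Then
\[
y_i=(c-i)e>e=x_i,
\]
so the second branch of the recursion applies and gives
\[
(x_{i+1},y_{i+1})=(e,(c-i-1)e).
\]
At $i=c-1$ we reach $(x_{c-1},y_{c-1})=(e,e)$. Now $x_{c-1}=y_{c-1}$, so the algorithm terminates.

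Collecting all entries of the pairs $(e,ce),(e,(c-1)e),\ldots,(e,e)$ gives exactly $\{e,2e,\ldots,ce\}$, which is the claimed set.

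The main obstacle is not computational but definitional. The paper's definition of $E(e,d)$ is only implicit, namely "the set of values arising," and one must fix exactly which values are recorded. The computation above records both coordinates, or equivalently the larger coordinate at each step together with the terminal value. Either convention yields $\{e,\ldots,ce\}$, since every $y_i$ is a multiple $(c-i)e$ and every $x_i$ is $e$.

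I would also remark that if one instead recorded the differences $y_i-x_i$, the terminal value $e$ would still appear. So the set is robust to the choice of convention, and this is what makes the statement consistent with the $k=2$ example earlier.
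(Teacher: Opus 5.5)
Your computation is correct, and it proves the literal statement, which the paper's argument does not directly address. You run the subtractive algorithm from $(e,ce)$ and show by induction that the pairs are $(e,(c-i)e)$ until it terminates at $(e,e)$, so the values recorded are exactly $e,2e,\ldots,ce$.

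The paper instead treats this equality as evident. With $M=qe+d=qe+ce$ and $q=\lceil a/k\rceil$, it sets $r=q+c-i$ to get $M-re=ie$. In other words, it shows that each $ie$ with $1\le i\le c$ already lies in the boundary family $\{M-re:r\ge 0,\ M-re>0\}$. That is what the paper needs downstream: the three-gap argument for $E(e,d)\subseteq\Delta(S_h)$ assumes $0<\rho<e$, so the divisible case must be absorbed into Proposition~\ref{prop:boundary-family}. Your route supplies the combinatorial identity the paper leaves implicit, and the paper's supplies the inclusion into $\Delta(S_h)$; together they settle the case $e\mid d$.

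One correction to your closing remark: the set is not robust under recording the differences $y_i-x_i$. Those differences are $(c-1)e,\ldots,e$, which miss $ce=d$ unless the initial pair is also recorded. The paper's convention records all entries of all pairs; in the $k=2$ example both $109=d$ and $142=e$ appear in $\Delta(S_h)$. That is your main reading, so the proof itself is unaffected.
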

\begin{proof}

Set
\[r=q+c-i.\]
Then
\[
\begin{aligned}
M-re
&=
M-(q+c-i)e \\
&=
qe+ce-(q+c-i)e \\
&=
ie.
\end{aligned}
\]
\end{proof}
Now define
\[\rho=d\bmod e,\]
so that
\[0<\rho<e.\]

Let
\[\rho\equiv d\pmod e, \qquad 0<\rho<e.\]
For \(J\geq 1\), define
\[P_J=\{j\rho\bmod e:0\leq j<J\}.\]
Let \(G_J\) denote the set of cyclic gaps between consecutive elements of \(P_J\).

\begin{proposition}[Three-Gap Realization]
\label{prop:euclidean-gap}
By the Three-Gap Theorem and the subtractive Euclidean algorithm,
\[
\bigcup_{J\geq 1}G_J=E(e,\rho).
\]
Alternatively, every gap $g\in G_J $ created by inserting a new point $j$ and then finding $j\rho \pmod e$ splits the current gap between $j's$ so that it conforms to the subtractive Euclidean Algorithm.
Since \(E(e,d)=E(e,\rho)\), it is therefore enough to prove that every
\[
g\in G_J
\]
belongs to \(\Delta(S_h)\).
\end{proposition}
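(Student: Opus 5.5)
The plan is to prove the equality $\bigcup_{J\geq1}G_J=E(e,\rho)$ by induction on $J$. I would track the multiset of cyclic gaps of $P_J$ on the circle $\mathbb{Z}/e\mathbb{Z}$, viewed as a circle of length $e$, and compare it step by step with the states $(x_i,y_i)$ of the subtractive Euclidean algorithm started at $(e,\rho)$. The base cases fix the normalization. For $J=1$ the set $P_1=\{0\}$ has a single cyclic gap of length $e=x_0$. For $J=2$ the gaps are $\rho$ and $e-\rho$, which are exactly the entries of the state reached after the first subtraction. Throughout, I read $E(e,\rho)$ as the set of all entries $x_i,y_i$ that occur before termination; the terminal value is $\gcd(e,\rho)$.

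The inductive invariant I would aim for is the Three-Gap Theorem in its Euclidean form. For every $J$ there is a stage $i$ with current pair $(x,y)=(x_i,y_i)$, say $x>y$ after relabelling, such that every gap of $P_J$ lies in $\{y,\,x,\,x+y\}$. Moreover, the gaps of length $x+y$ are exactly those that have not yet been split at this stage. Here $x+y$ is the larger entry of the previous state, so it also lies in $E(e,\rho)$, and the containment $\bigcup_J G_J\subseteq E(e,\rho)$ follows at once from the invariant. For the reverse containment, it suffices to show that each subtraction step $(x,y)\mapsto(x-y,y)$ is realized by some insertion. That is, some new point $J\rho \bmod e$ falls inside a gap of length $x$ and cuts it into pieces of lengths $y$ and $x-y$. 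Then every value $x_i-y_i$, and hence every element of $E(e,\rho)$, eventually appears in some $G_J$. Finally, once $J$ reaches $e/\gcd(e,\rho)$ all gaps equal $\gcd(e,\rho)$, which matches termination of the algorithm.

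The main obstacle is the insertion step: showing that the new point $J\rho \bmod e$ always lands in a gap of the largest current type and splits it in exactly the Euclidean proportions $y$ and $x-y$. I would handle this with the standard nearest-neighbour argument behind the Three-Gap Theorem. Let $j_+$ and $j_-$ be the indices in $[0,J)$ for which $j_+\rho \bmod e$ is closest to $0$ from above and $j_-\rho \bmod e$ is closest to $0$ from below. Because the configuration is invariant under rotation by $\rho$, every gap is bounded by a pair of points differing in index by $j_+$, by $j_-$, or by $j_++j_-$. These three cases give the three lengths $\delta_+$, $\delta_-$ and $\delta_++\delta_-$. When $J$ reaches $j_++j_-$, the new point subdivides a gap of length $\delta_++\delta_-$ into $\delta_+$ and $\delta_-$. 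The largest admissible length then drops from $\delta_++\delta_-$ to $\max(\delta_+,\delta_-)$, and one of the closest-return distances is replaced by $|\delta_+-\delta_-|$. This is exactly one subtractive Euclidean step, so the invariant propagates.

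To finish, I would note that $E(e,d)=E(e,\rho)$, since subtracting $e$ from $d$ repeatedly is the opening run of the subtractive algorithm on $(e,d)$. Hence, as the proposition states, it remains only to realize each $g\in G_J$ as a difference of consecutive lengths in $\Delta(S_h)$. That task is deferred to the column-structure argument using the congruence $\operatorname{Max}(j)\equiv C_s-dj \pmod e$.
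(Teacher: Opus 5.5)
Your induction on $J$ is a fleshed-out version of the paper's bare appeal to the Three-Gap Theorem. The invariant you track is the right one for $\bigcup_{J\ge1}G_J=E(e,\rho)$: gaps lie in $\{x,y,x+y\}$, the $(x+y)$-gaps are the unsplit ones, the base cases are $J=1,2$, and all gaps equal $\gcd(e,\rho)$ once $J=e/\gcd(e,\rho)$.

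\textbf{The genuine error is your last step.} The identity $E(e,d)=E(e,\rho)$ is false whenever $d>e$, and your own justification shows why. The opening run of the subtractive algorithm on $(e,d)$ passes through $(e,d),(e,d-e),\ldots,(e,\rho+e)$ before reaching $(e,\rho)$. It therefore contributes the values $d,d-e,\ldots,\rho+e$. These belong to $E(e,d)$ but all exceed $e=\max E(e,\rho)$. For example, with $h=4$ and $d=7$ we get $E(3,7)=\{1,2,3,4,7\}$, while $E(3,1)=\{1,2,3\}$.

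No cyclic gap of any $P_J$ exceeds $e$, so these values can never come from the sets $G_J$. Hence proving $G_J\subseteq\Delta(S_h)$ for all $J$ does not by itself give $E(e,d)\subseteq\Delta(S_h)$. The repair is to note that each missing value has the form $d-se=M-(q+s)e$ with $q=\lceil a/k\rceil$ and $s\ge0$. It is therefore supplied by Proposition~\ref{prop:boundary-family}, and the correct reduction is $E(e,d)\subseteq\bigcup_J G_J\cup\{M-re: r\ge0,\ M-re>0\}$. The statement itself, and the paper's $k=2$ remark, assert the same false identity. It is harmless for the main theorem only because of this absorption into the arithmetic family.

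\textbf{Your description of the insertion step also needs correcting}, although the standard argument fixes it. Index $P_J$ by $0,\ldots,J-1$ and choose $j_\pm$ in $\{1,\ldots,J-1\}$. The successor of the point of index $n$ then has:
\begin{itemize}
\item index $n+j_+$ (gap $\delta_+$) when $n+j_+\le J-1$;
\item index $n-j_-$ (gap $\delta_-$) when $n\ge j_-$;
\item index $n+j_+-j_-$ (gap $\delta_++\delta_-$) otherwise.
\end{itemize}
So the third index difference is $j_+-j_-$, not $j_++j_-$. The first two cases are mutually exclusive because $j_++j_-\ge J$. This follows from minimality of $\delta_\pm$ and distinctness of the points while $J\le e/\gcd(e,\rho)$.

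The $(\delta_++\delta_-)$-gaps are cut into $\delta_+$ and $\delta_-$ by the points of index $J,\ldots,j_++j_--1$. The point of index $j_++j_-$ lies at $\delta_+-\delta_-\bmod e$. It cuts a gap of length $\max(\delta_+,\delta_-)$ into $\min(\delta_+,\delta_-)$ and $|\delta_+-\delta_-|$. Only this transition insertion is the Euclidean step ``split $x$ into $y$ and $x-y$.''

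Finally, $P_J$ is not literally invariant under rotation by $\rho$. What the argument uses is that $n\mapsto n+t$ shifts positions by $t\rho$ as long as the indices stay in $[0,J-1]$.
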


\begin{lemma}

\[
\forall s\in S, s=aC_s+d\mu_s.
\]
Setting
\(\mu_s=0,\),
\(
s=aC_s.
\)

\[\ell(j,y)\equiv C-dj\pmod e.\]
Since
\[d\equiv \rho\pmod e,\]
we have
\[C-dj\equiv C-j\rho\pmod e.\]
\end{lemma}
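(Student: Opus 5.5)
The statement to be justified has two parts. First, every $s\in S_h$ satisfies $s=aC_s+d\mu_s$, which specializes to $s=aC_s$ when $\mu_s=0$. Second, every lattice point $(j,y)$ of $R_s$ has length $\ell(j,y)\equiv C-dj\equiv C-j\rho\pmod e$. Both are direct unpackings of the Lattice Parametrization lemma, so I will read them off that lemma and the definitions rather than build new machinery.

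\emph{The identity for $s$.} By the Lattice Parametrization lemma, $\mu_s\in\{0,\dots,a-1\}$ is chosen with $s\equiv d\mu_s\pmod a$. Hence $a\mid s-d\mu_s$, and $C_s=\frac{s-d\mu_s}{a}$ is an integer. Rearranging this definition gives $s=aC_s+d\mu_s$. When $\mu_s=0$, which holds exactly when $a\mid s$, this reduces to $s=aC_s$. The only thing I will add is a remark that $C_s$ is an integer, and that it is nonnegative for $s\in S_h$ because $s\ge dm\ge d\mu_s$.

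\emph{The congruence for lengths.} I will use the length formula from the Lattice Parametrization lemma in the form derived at the end of its proof, $\ell_s(j,y)=C_s-dj-ey$. The lemma statement itself writes $C_s-d\mu_s-ey$, which appears to be a typo, and I will note this. Reducing modulo $e$ removes the term $ey$, so $\ell(j,y)\equiv C_s-dj\pmod e$, independently of $y$. This is the same reasoning as the earlier lemma $\operatorname{Max}(j)\equiv C_s-dj\pmod e$, now applied to every point of the column rather than only its top. Since $\rho=d\bmod e$, we have $d\equiv\rho\pmod e$. Multiplying by $j$ and subtracting from $C_s$ gives $C_s-dj\equiv C_s-j\rho\pmod e$, which completes the congruence.

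\emph{Main obstacle.} The only delicate point is bookkeeping. The statement writes $C$ for $C_s$, and the lemma's length formula has to be taken in its corrected form $C_s-dj-ey$. I will state both conventions explicitly. I will also observe that the residues of the column lengths modulo $e$ are therefore $C_s-j\rho \bmod e$ for $j=0,1,\dots$, which is the point set $P_J$ reflected and translated by $C_s$. Reflection and translation preserve cyclic gaps, so this matches the three-gap framework of Proposition~\ref{prop:euclidean-gap}.
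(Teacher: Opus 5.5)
Your proposal is correct and follows the paper's own inline justification. The identity $s=aC_s+d\mu_s$ is just the definition $C_s=(s-d\mu_s)/a$ rearranged, and the congruence comes from reducing $\ell_s(j,y)=C_s-dj-ey$ modulo $e$ and using $d\equiv\rho\pmod e$. You are also right that $C_s-d\mu_s-ey$ in the parametrization lemma's statement is a typo for $C_s-dj-ey$, which is what its proof actually derives.
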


\begin{lemma}
\textnormal{Now choose \(j_0\), and consider the consecutive columns}
\[j_0,j_0+1,\ldots,j_0+J-1.\]
\textnormal{After translating all residues by the fixed quantity \(C-dj_0\), the residue classes in these columns are exactly the elements of \(P_J\). Therefore, since \(g\in G_J\), there exist}
\[r_1,r_2\in\{0,\ldots,J-1\}\]
\textnormal{such that, with
\[j_1=j_0+r_1,\qquad j_2=j_0+r_2,\]
the residue classes corresponding to \(j_1\) and \(j_2\) are consecutive and have cyclic separation \(g\). Our goal is to "lift" this modular difference into a difference of actual lengths.}
\end{lemma}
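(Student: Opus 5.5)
We need the statement just stated: over consecutive columns $j_0,\ldots,j_0+J-1$, after translating by $C-dj_0$ the residues are exactly $P_J$, so any gap $g\in G_J$ is realized by two indices $r_1,r_2$ whose residues are cyclically adjacent at separation $g$.

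The plan is to reduce to the definition of $P_J$ and $G_J$. By the preceding lemma, every length in column $j$ satisfies $\ell(j,y)\equiv C-dj\equiv C-j\rho \pmod e$. Writing $j=j_0+t$ with $0\le t\le J-1$, the residue of column $j$ is
\[
C-dj_0-t\rho \pmod e .
\]
After subtracting the fixed quantity $C-dj_0$ and negating, this becomes $t\rho \bmod e$. Negation is a reflection of the cycle $\mathbb{Z}/e\mathbb{Z}$. It preserves which points are cyclically adjacent and the cyclic distances between them. Therefore the translated, reflected residue set of these $J$ columns is exactly $\{t\rho \bmod e: 0\le t<J\}=P_J$, and its multiset of cyclic gaps is exactly $G_J$.

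Two technical points need to be checked first.

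\textbf{Every column carries its residue.} Each column $j_0+t$ must be nonempty, so that its residue actually occurs in $L(s)$. I would arrange this by taking $j_0$ large enough that $L_j\le \mu_s+aj$ holds trivially, since $\lceil (\mu_s+aj)/k\rceil\le \mu_s+aj$. I would then take $s$ large, say $s=aC_s$ with $\mu_s=0$ as in the lemma, so that the budget bound $B_j$ still exceeds $L_j$ for all $j\le j_0+J-1$. By Lemma~\ref{lem:column-threshold} this is possible: increasing $C_s$ raises $B_j$ uniformly.

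\textbf{The residues are distinct.} Distinctness of the $t\rho$ only holds for $J\le e/\gcd(\rho,e)$. For larger $J$ the set $P_J$ is the whole subgroup and the statement is still fine, because $P_J$ is defined as a set.

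With both points in hand, take any $g\in G_J$. By definition there are two points $r_1\rho,\,r_2\rho \bmod e$ of $P_J$ that are adjacent on the cycle, with no point of $P_J$ strictly between them along an arc of length $g$. Pulling back through the reflection and translation gives $j_1=j_0+r_1$ and $j_2=j_0+r_2$, whose column residues are consecutive with cyclic separation $g$.

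The main obstacle is making ``translating'' precise. As written, the residue map is $t\mapsto -t\rho$ rather than $t\rho$, so the reflection must be explicitly noted to preserve gaps. I would handle it by observing that $G_J$ is invariant under $x\mapsto -x$ on $\mathbb{Z}/e\mathbb{Z}$. Alternatively, one can replace $\rho$ by $e-\rho$ and use $E(e,\rho)=E(e,e-\rho)$ up to the obvious first step. The actual lifting of $g$ to a difference of lengths is deferred, as the statement says.
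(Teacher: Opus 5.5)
Your proposal is correct and is essentially the paper's argument. The paper simply asserts that the residues $C-d(j_0+t)\equiv (C-dj_0)-t\rho \pmod e$ of the $J$ columns form $P_J$ after translation. Your explicit handling of the sign (the translated residues are really $-P_J$, and negation on $\mathbb{Z}/e\mathbb{Z}$ preserves adjacency and cyclic gaps) and of column nonemptiness makes precise what the paper glosses over. Keep in mind that the reflection reverses orientation, so to match the convention $R_2-R_1\equiv g \pmod e$ used in the next lemma you must swap the labels $r_1$ and $r_2$.
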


\begin{lemma}
Let \(R_i\in\{0,\ldots,e-1\}\) denote the residue of \(C-dj_i\) modulo \(e\). Thus
\[C-dj_i=R_i+eQ_i\]
for some \(Q_i\in\mathbb Z\). \textnormal{Since the two residue classes have cyclic separation \(g\), we may write
\[R_2-R_1=g+te\]
for some} \(t\in\mathbb Z\).

For a lattice point \((j_i,y_i)\), we have
\[\ell_i = C-dj_i-ey_i = R_i-e(y_i-Q_i). \]
Set
\[
\begin{aligned}
y_i'&=y_i-Q_i,\\
\ell_i&=R_i-ey_i',\\
\ell_2-\ell_1
&=(R_2-R_1)-e(y_2'-y_1').
\end{aligned}
\]
Using
\(R_2-R_1=g+te,
\)
we see that
\[
\ell_2-\ell_1=g
\quad\text{when}\quad
y_2'-y_1'=t.
\]
Equivalently,
\[
y_2-y_1=t+Q_2-Q_1.
\]
Set
\[
\alpha=t+Q_2-Q_1.
\]

Since
\[
Q_i=\frac{C-dj_i-R_i}{e},
\]
we obtain
\[
\begin{aligned}
Q_2-Q_1
&=\frac{-d(j_2-j_1)-(R_2-R_1)}{e}\\
&=\frac{-d(r_2-r_1)-g-te}{e}\\
&=-q(r_2-r_1)
-\frac{\rho(r_2-r_1)+g}{e}-t,
\end{aligned}
\]
where we have used
\[
j_2-j_1=r_2-r_1,
\qquad
R_2-R_1=g+te,
\qquad
d=qe+\rho.
\]
The residue relation between the two consecutive points gives
\[
w=\frac{\rho(r_2-r_1)+g}{e}\in\mathbb Z.
\]
Therefore
\[
Q_2-Q_1=-q(r_2-r_1)-w-t,
\]
and hence
\[
\boxed{\alpha=-q(r_2-r_1)-w.}
\]

In particular, the \(t\)-terms cancel. Moreover,
\[
|r_2-r_1|\leq J-1
\qquad\text{and}\qquad
|w|\leq J-1.
\]
Consequently,
\[
\boxed{|\alpha|\leq(q+1)(J-1).}
\]
\end{lemma}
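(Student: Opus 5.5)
The lemma is a direct bookkeeping computation, so I would simply carry out the algebra in the order it is displayed and then justify the two facts that are asserted without proof: that $w$ is an integer, and that $|r_2-r_1|,|w|\le J-1$. Throughout I would write $\Delta=r_2-r_1=j_2-j_1$. One notational point should be made explicit first. In this section $q$ means $\lfloor d/e\rfloor$, so that $d=qe+\rho$ with $0<\rho<e$; it does not mean the $\lceil a/k\rceil$ of Section~\ref{sec:maximaldelta}.

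\textbf{Step 1 (the algebra).} From $C-dj_i=R_i+eQ_i$ I get $e(Q_2-Q_1)=-d\Delta-(R_2-R_1)$. Substituting $d=qe+\rho$ and $R_2-R_1=g+te$ and dividing by $e$ gives
\[
Q_2-Q_1=-q\Delta-\frac{\rho\Delta+g}{e}-t .
\]
Adding $t$ then yields $\alpha=t+Q_2-Q_1=-q\Delta-w$, with the $t$-terms cancelling as claimed.

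\textbf{Step 2 (integrality of $w$).} I would reduce the defining relation modulo $e$. We have $R_2-R_1\equiv -d\Delta\equiv-\rho\Delta\pmod e$, and also $R_2-R_1\equiv g\pmod e$. Hence $\rho\Delta+g\equiv0\pmod e$, so $w\in\mathbb Z$. Equivalently, $w=Q_1-Q_2-q\Delta-t$ is a combination of integers.

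\textbf{Step 3 (the bounds).} Since $r_1,r_2\in\{0,\dots,J-1\}$, we have $|\Delta|\le J-1$ immediately. For $w$ I would use $0<\rho\le e-1$, together with the fact that a cyclic gap satisfies $0<g\le e$, with $g=e$ only when $J=1$. That degenerate case has $\Delta=0$ and would be set aside separately.

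If $\Delta\ge0$, then $0<\rho\Delta+g\le (e-1)(J-1)+g$. This gives $w\le J-1+\frac{g-(J-1)}{e}<J$, and since $w$ is an integer, $w\le J-1$.

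If $\Delta<0$, then $\rho\Delta+g\ge -(e-1)(J-1)+g>-(e-1)(J-1)$. This gives $w>-(J-1)+\frac{J-1}{e}-\dots$, and in any case $w>-J$, so $w\ge-(J-1)$.

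The triangle inequality then gives $|\alpha|\le q|\Delta|+|w|\le(q+1)(J-1)$.

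The only step that needs any care is the bound on $w$. A naive estimate gives $|w|<J$, so the strict inequality has to be combined with the integrality from Step~2 to reach $|w|\le J-1$. The edge case $g=e$ also has to be handled explicitly, since there $\Delta=0$ and $w=1$, which would otherwise break the bound when $J=1$.
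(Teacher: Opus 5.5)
Your proposal follows the paper's own computation step for step, and it supplies what the paper only asserts: integrality of $w$ from $R_2-R_1\equiv -\rho(r_2-r_1)\equiv g \pmod e$, and $|w|\le J-1$ from combining the strict estimate with that integrality (the one piece you leave unstated, $w\le 0$ when $r_2-r_1<0$, is immediate from $\rho(r_2-r_1)+g<g\le e$). Your caveats are also right: $q$ here is $\lfloor d/e\rfloor$, not $\lceil a/k\rceil$, and for $J=1$ the only gap is $g=e$, which gives $w=1$ and $\alpha=-1$, so the stated bound $|\alpha|\le 0$ genuinely fails there and that case has to be excluded from the lemma.
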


\begin{lemma}
We now choose \(C\) sufficiently large so that
\[
C\geq(ah+d)(j_0+J-1).
\]
This would then allow
\[
U_j=aj
,\]
which would further generate the allowable \(y\)-coordinates in column \(j\) to be precisely
\[
I_j=\left[\left\lceil\frac{aj}{k}\right\rceil,aj\right]\cap\mathbb Z.
\]

If we then choose \(j_0\) sufficiently large, there exist allowable lattice points
\[
(j_1,y_1),
\qquad
(j_2,y_2)
\]
satisfying
\[
y_2-y_1=\alpha.
\]
\end{lemma}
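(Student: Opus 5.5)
The plan is to work with $\mu_s=0$ (so $s=aC$ and $m_j=aj$) and to use the interval description $I_j=[\lceil aj/k\rceil,aj]\cap\mathbb Z$. The claim has two parts, and I would prove them in order.

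\emph{Step 1: every column $j\le j_0+J-1$ is unconstrained.} By Lemma~\ref{lem:column-threshold} it is enough to check the last column $j=j_0+J-1$. We need
\[
\left\lfloor\frac{s-daj}{ah}\right\rfloor\ \ge\ aj .
\]
Since $s=aC$, this is equivalent to $C-dj\ge h\,aj$, that is, $C\ge (ah+d)j$. This is exactly the hypothesis $C\ge (ah+d)(j_0+J-1)$. Hence $U_j=aj$ for all $j\le j_0+J-1$, and each such column is the full interval $I_j$.

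\emph{Step 2: realize the prescribed difference $\alpha$.} Put $Y_1=\{y:(j_1,y)\text{ allowable}\}=I_{j_1}$ and $Y_2=I_{j_2}$. The set of achievable differences $y_2-y_1$ is then the integer interval
\[
\bigl[\lceil aj_2/k\rceil-aj_1,\ aj_2-\lceil aj_1/k\rceil\bigr].
\]
Write $\delta=j_2-j_1=r_2-r_1$, so $|\delta|\le J-1$. The upper endpoint is at least
\[
aj_1\Bigl(1-\frac1k\Bigr)+a\delta-1,
\]
and the lower endpoint is at most
\[
-aj_1\Bigl(1-\frac1k\Bigr)+\frac{a\delta}{k}+1.
\]
Because $k\ge2$, the factor $1-1/k$ is positive. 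So once
\[
aj_0\Bigl(1-\frac1k\Bigr)>(a+q+1)(J-1)+1,
\]
the interval contains $[-(q+1)(J-1),(q+1)(J-1)]$. By the boxed bound $|\alpha|\le (q+1)(J-1)$ from the preceding lemma, $\alpha$ lies in it. Choose $y_1,y_2$ accordingly.

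\emph{Main obstacle.} The delicate point is the order of quantifiers. $j_0$ must be chosen first, depending only on $J$, $a$, $k$, $q$; then $C$ is chosen depending on $j_0$. Since $\alpha$ is bounded independently of $j_0$, no circularity arises.

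A secondary issue is that $C$ must be an actual value of $s/a$ with $s\in S_h$. Any $C\ge0$ gives $s=aC\in S_h$, so this is automatic. I would also remark that the lengths $\ell_1,\ell_2$ obtained differ by $g$; checking that they are consecutive in $L(s)$ is outside this statement.
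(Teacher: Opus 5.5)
Your proof is correct and follows the route the paper intends but only asserts. You take $\mu_s=0$ so that $s=aC$, you note that $\lfloor (C-dj)/h\rfloor\ge aj$ is equivalent to $C\ge(ah+d)j$, and you use that the differences $y_2-y_1$ over $I_{j_1}\times I_{j_2}$ form an integer interval whose two ends move away from $0$ linearly in $j_0$ while $\alpha$ does not depend on $j_0$. The paper gives no argument for either step, and its wording picks $j_0$ after fixing $C$, which is circular; your explicit threshold $aj_0(1-1/k)>(a+q+1)(J-1)+1$, with $j_0$ chosen before $C$, supplies the missing details and fixes that order. Here $q=\lfloor d/e\rfloor$ from $d=qe+\rho$, not $\lceil a/k\rceil$, but either is a fixed constant, so the argument is unaffected.
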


\begin{proposition}[\text{Realization of Three-Gap Differences}]
By the definition of \(\alpha\),
\[
y_2'-y_1'=t.
\]
Therefore
\[
\begin{aligned}
\ell_2-\ell_1
&=(R_2-R_1)-e(y_2'-y_1')\\
&=(g+te)-et\\
&=g.
\end{aligned}
\] \textbf{Thus} \(g\) \textbf{is a difference of factorization lengths.}

\textnormal{Finally, because the residue classes corresponding to \(j_1\) and \(j_2\) are consecutive elements of \(P_J\), there is no residue class between them. Hence there is no factorization length strictly between the two constructed lengths. Therefore, they are consecutive elements of the length set \(L(s)\), and so}
\[g=\ell_2-\ell_1\in\Delta(S_h).\]

Since \(g\in G_J\) was arbitrary, we conclude that
\[G_J\subseteq\Delta(S_h)\]
for every \(J\geq1\).
\end{proposition}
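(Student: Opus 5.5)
The plan is to fix $g\in G_J$ and the data $j_0,r_1,r_2,R_i,Q_i,t,w,\alpha$ from the preceding lemmas. We then build one explicit element $s\in S_h$ whose length set contains two consecutive lengths differing by exactly $g$. The computation $\ell_2-\ell_1=(g+te)-et=g$ is already done once we have lattice points $(j_1,y_1),(j_2,y_2)\in R_s$ with $y_2-y_1=\alpha$. So the work splits into two parts: \emph{existence} of such points, and \emph{consecutiveness} of $\ell_1,\ell_2$ in $L(s)$.

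\textbf{Step 1 (construction).} Take $\mu_s=0$, so $s=aC$ and column $j$ has $m_j=aj$. Choose $j_0$ large and $C$ in the window
\[
(ah+d)(j_0+J-1)\le C<(ah+d)(j_0+J)\cdot(\text{small constant}),
\]
refining the window as needed. The lower bound makes columns $j_0,\dots,j_0+J-1$ unconstrained, with $I_j=[\lceil aj/k\rceil,aj]$. The upper bound should make columns $j\ge j_0+J$ empty or at least force their lengths far below the window; Lemma~\ref{lem:column-threshold} and the budget bound $B_j$ control this. Since $|I_j|\approx aj(1-1/k)\to\infty$ while $|\alpha|\le (q+1)(J-1)$ is fixed, for $j_0$ large we can place $y_1$ in the middle of $I_{j_1}$ and set $y_2=y_1+\alpha\in I_{j_2}$. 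Moreover, both $\ell_1,\ell_2$ then lie in the interior of the length ranges of \emph{every} window column.

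\textbf{Step 2 (consecutiveness).} Suppose $\ell\in L(s)$ with $\ell_1<\ell<\ell_2$. Since $0<\ell-\ell_1<g<e$ (any cyclic gap with $J\ge2$ is less than $e$), the residue of $\ell$ mod $e$ lies strictly inside the cyclic arc from $R_1$ to $R_2$. That arc contains no element of $C-dP_J$, so by the residue lemma $\ell\equiv C-dj\pmod e$ the length $\ell$ cannot come from a window column. It remains to exclude columns outside the window. Columns $j<j_0$ are unconstrained, and their minimum $C-dj-eaj$ exceeds $\operatorname{Min}(j_0)$ by at least $d+ea$. We choose $y_1$ near the bottom of the range so that $\ell_1,\ell_2$ sit below $\operatorname{Min}(j_0-1)$ but above $\operatorname{Min}(j_0+J-1)$. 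Columns $j\ge j_0+J$ are handled by the upper bound on $C$: their maximum $C-dj-eL_j$ should fall below $\ell_1$. We would use the decrease of $\operatorname{Max}$ in $j$ exactly as in Lemma~\ref{lem:consecutive-boundary-extrema}.

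\textbf{Main obstacle.} The delicate point is Step 2's interplay of the two requirements. The points $\ell_1,\ell_2$ must lie inside the length ranges of all $J$ window columns (so that the residue argument is the only obstruction), yet strictly between the ranges of the columns just outside the window. The column ranges have width about $ea j(1-1/k)$ and overlap heavily, so a naive choice fails. The first thing to try is to drop the requirement that $j_0$ be large and instead use $j_0=0$, with $C$ just big enough that only columns $0,\dots,J-1$ are nonempty. Then there are no outside columns at all, and the sole issue is fitting $y_1,y_1+\alpha$ into the short intervals $I_{j_1},I_{j_2}$. If that fails, we would replace $\alpha$ by $\alpha+ne$-shifted representatives, using the freedom in $t$, to move the pair into a common overlap region.
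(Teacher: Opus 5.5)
There is a genuine gap, and it is the one you flag yourself without closing: nothing in the proposal shows that columns outside the window $j_0,\dots,j_0+J-1$ contribute no length strictly between $\ell_1$ and $\ell_2$. The paper's own argument has the same omission, since it deduces consecutiveness from the residues of the window columns alone, so it cannot be borrowed to fill this.

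In your large-$j_0$ regime this is not delicate bookkeeping; the construction cannot work.
\begin{itemize}
\item With $\mu_s=0$, column $j$ is nonempty whenever $C\ge h\lceil aj/k\rceil+dj$, hence for all $j\le (C-h)/(ah/k+d)$. Since $k\ge 2$, for every $C\ge (ah+d)(j_0+J-1)$ this reaches far beyond $j_0+J$ once $j_0$ is large. So no upper bound on $C$ empties the right-hand columns.
\item Nor do their maxima fall below the window: $\operatorname{Max}(j_0+J)-\operatorname{Min}(j_0)\ge eaj_0(1-1/k)-(d+ea/k)J-e\to\infty$. The monotonicity of $\operatorname{Max}$ from the boundary-extrema lemma only compares column maxima with one another, and $\ell_1$ is not a column maximum.
\item Concretely, every $j$ with $j\ge j_0+J-1+eh/d$ and $dj+e\lceil aj/k\rceil\le (d+ea)(j_0-1)$ satisfies $\operatorname{Min}(j)\le \operatorname{Min}(j_0+J-1)$ and $\operatorname{Max}(j)\ge \operatorname{Min}(j_0-1)$. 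These indices form a block of consecutive columns whose size grows linearly in $j_0$. Each such column's step-$e$ progression therefore crosses the entire band in which you place $\ell_1,\ell_2$; placing them higher just lets left-hand columns in instead.
\end{itemize}
Once this block has at least $e$ columns, the residues $C-dj$ exhaust the class of $C$ modulo $\gcd(d,e)$. Consecutive lengths in the band then differ by exactly $\gcd(d,e)$, so this construction can never produce a consecutive pair with $g>\gcd(d,e)$.

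Your fallbacks do not repair this.
\begin{itemize}
\item With $j_0=0$ and only columns $0,\dots,J-1$ nonempty, the residue argument does give consecutiveness. But existence of the two lattice points fails in general, because column $0$ is the single length $C$ and low columns barely overlap. For example, take $e=5$ and $\rho=2$. Then $3=e-\rho\in E(e,d)$ lies in $G_J$ only for $J=2$; the gap sets are $\{5\},\{2,3\},\{1,2\},\{1,2\},\{1\},\dots$. With only columns $0$ and $1$ nonempty, $L(s)=\{C\}\cup\{C-d-ey:\lceil a/k\rceil\le y\le U_1\}$, whose only consecutive differences are $M$ and $e$.
\item The idea of shifting $\alpha$ via the freedom in $t$ is vacuous. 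The parameter $t$ cancels from $\alpha$, and indeed $\ell_2-\ell_1=-d(j_2-j_1)-e(y_2-y_1)$ forces $y_2-y_1=-(d(r_2-r_1)+g)/e$. Any other shift changes $\ell_2-\ell_1$ by a nonzero multiple of $e$.
\end{itemize}
What is missing is a choice of $s$ (possibly with $\mu_s\neq 0$, or using the constrained part of the lattice) and of a location for $\ell_1,\ell_2$ that achieves two things at once. First, you must control exactly which columns have lengths near $[\ell_1,\ell_2]$, so that the residue argument genuinely excludes everything between them. Second, the two required lattice points must exist there. Neither regime in your proposal delivers both.
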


\begin{corollary}
\label{corollary:Euclidean-Inclusion}
\[
E(e,\rho)=\bigcup_{J\geq1}G_J\subseteq\Delta(S_h).
\]
Finally, since
\[E(e,d)=E(e,\rho),\]
we obtain
\[
\boxed{E(e,d)\subseteq\Delta(S_h).}
\]
\end{corollary}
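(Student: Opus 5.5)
The inclusion $E(e,d)\subseteq\Delta(S_h)$ will be assembled from the results of this section, with a separate treatment of the degenerate case $e\mid d$. First, when $e\mid d$ and $d=ce$, the preceding proposition identifies $E(e,d)=\{e,2e,\ldots,ce\}$. Each $ie$ with $1\le i\le c$ has the form $M-re$ with $r=q+c-i\ge 0$, because $M=qe+d=(q+c)e$. Since $ie>0$, the Boundary Family (Proposition~\ref{prop:boundary-family}) gives $ie\in\Delta(S_h)$ directly. In this case no Three-Gap argument is needed.

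In the main case $0<\rho=d\bmod e<e$, I would proceed in two steps. The first step is the identity $E(e,d)=E(e,\rho)$. The subtractive Euclidean algorithm applied to $(e,d)$ first subtracts $e$ from $d$ repeatedly. The values produced in that phase are $d,d-e,\ldots,\rho+e$, and these are exactly elements $M-re$ of the boundary family, so they are already in $\Delta(S_h)$. After that phase the algorithm continues from $(e,\rho)$. The second step is the Three-Gap identity $\bigcup_J G_J=E(e,\rho)$ from Proposition~\ref{prop:euclidean-gap}. For this I would invoke the standard fact about $\{j\rho \bmod e\}$: each new insertion splits a largest gap $L$ into pieces $S$ and $L-S$. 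The gap multiset therefore evolves exactly like the pairs $(x_i,y_i)$ of the subtractive algorithm, starting from the single gap $e$ and then $\{\rho,e-\rho\}$.

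With these identities in hand, it suffices to show $G_J\subseteq\Delta(S_h)$ for every $J$, and this is the lifting construction carried out above. Fix $g\in G_J$. Choose $\mu_s=0$ and a window of columns $j_0,\ldots,j_0+J-1$ whose residues translate to $P_J$. Pick the two columns $j_1,j_2$ whose residues are cyclically adjacent at distance $g$, and compute the required vertical offset $\alpha=-q(r_2-r_1)-w$, which satisfies $|\alpha|\le(q+1)(J-1)$. Then take $s=aC$ with $C$ huge, so that every column in the window is unconstrained with $y$-range $[\lceil aj/k\rceil,aj]$. That range has length about $aj(1-1/k)$, which grows with $j_0$. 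Hence for $j_0$ large both columns contain points at heights $y_1,y_2$ with $y_2-y_1=\alpha$, and the resulting two lengths differ by exactly $g$.

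The main obstacle is consecutiveness: showing that no element of $L(s)$ lies strictly between $\ell_1$ and $\ell_2$. The problem is that columns outside the window, and other $y$-values inside the window, contribute lengths too. I would handle this by choosing $y_1$ near the \emph{top} of its column interval, so that $\ell_1$ is near the bottom of the lengths in column $j_1$. I would then compare against neighbouring columns using the monotonicity from Lemma~\ref{lem:consecutive-boundary-extrema}: columns far to the left have all lengths larger, and columns far to the right have all lengths smaller. This would leave only the window columns, whose lengths interleave in a band of width comparable to $e$. Inside that band, every length lying in $(\ell_1,\ell_2)$ must have a residue strictly between $R_1$ and $R_2$ cyclically, and no element of $P_J$ has such a residue by adjacency. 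If the band control fails, the fallback is to choose $s$ so that exactly the columns of the window are nonempty near the relevant heights, by tuning $C$ so that the budget constraint $U_j$ truncates the neighbouring columns.
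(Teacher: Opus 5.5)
\textbf{Gap: the consecutiveness step fails, and your proposed fix does not repair it.} You follow the paper's route. The boundary family covers $e\mid d$ and the values $d,d-e,\dots,\rho+e$; here you are more careful than the paper, since $E(e,d)$ and $E(e,\rho)$ are not literally equal when $d>e$. The three-gap identity then reduces matters to $G_J\subseteq\Delta(S_h)$, and $g\in G_J$ is lifted using $\mu_s=0$, $s=aC$, a window of columns and the offset $\alpha$. That lifting does produce two lengths with $\ell_2-\ell_1=g$. The trouble is showing they are consecutive. You rightly flag this step, but your band argument is false. The paper has the same hole: it infers consecutiveness from adjacency of residues inside $P_J$ and never controls columns outside the window.

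\textbf{Why the band argument fails.} With $\mu_s=0$ and $C$ huge, column $r$ contains every integer $\equiv C-dr \pmod e$ in $[C-(d+ea)r,\ C-dr-e\lceil ar/k\rceil]$. Put $X=C-\ell_1\ge (d+ea/k)j_1$.
\begin{itemize}
\item Every $r$ with $(d+ea)r\ge X$ and $dr+e\lceil ar/k\rceil\le X-g$ gives a column whose range contains all of $[\ell_1,\ell_2]$.
\item These $r$ form a run of consecutive integers of length at least $j_1\frac{ea(1-1/k)}{d+ea}-3$.
\item Putting $y_1$ at the top of its column only removes the columns to the left. These right-hand columns remain, and their number grows linearly in $j_0$, so there is no band of width comparable to $e$.
\item Your construction takes $j_0$ large to fit $\alpha$, so the run soon exceeds $e/\gcd(d,e)$ columns. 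Their residues $C-dr$ then exhaust every class $\equiv C \pmod{\gcd(d,e)}$, and $L(s)$ contains every such integer in $[\ell_1,\ell_2]$.
\end{itemize}
So the only gap this construction can exhibit is $\gcd(d,e)$.

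\textbf{The budget fallback does not help.} $B_j=\lfloor (C-dj)/h\rfloor$ only lowers the ceiling on $y$, so it deletes the \emph{smallest} lengths of columns past the threshold. The large lengths $C-dr-e\lceil ar/k\rceil$ of the right-hand neighbours, which are exactly the ones covering $\ell_1$, survive unless the whole column is empty. Emptying those columns pushes the window itself into the constrained regime. There your description $[\lceil aj/k\rceil,aj]$ of the $y$-ranges and your choice of $y_1,y_2$ no longer apply.

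\textbf{What is missing.} You need a construction that places $\ell_1,\ell_2$ near an end of $L(s)$, where only a bounded, explicitly controlled set of columns is active. For example, in $\langle 4,27,30,33\rangle$ (so $e=5$, $\rho=3$), the value $2=e-\rho$ is the gap between the two smallest lengths $5$ and $7$ of $138$. The three-gap bookkeeping must then be redone for that set of columns, including their partial overlaps.
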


This completes the proof.
\section{Truncated Columns}
\label{sec:otherdelta}
\begin{lemma}[Truncated-Column Lemma]
\label{lem:trunc}
Let $x<y$ be consecutive elements of $\mathcal L(s)$ with
\[
x\in C_i,\qquad y\in C_j.
\]
If
\[
y-x>e,
\]
then
\[
x=\operatorname{Max}(i),\qquad
y=\operatorname{Min}(j),\qquad
j<i.
\]
\end{lemma}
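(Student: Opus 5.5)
Within a single column $C_j$ the lengths $C_s-dj-ey$, $L_j\le y\le U_j$, form an arithmetic progression with common difference $e$, running from $\operatorname{Max}(j)$ down to $\operatorname{Min}(j)$. The plan is to show that a gap larger than $e$ can only occur at column endpoints, and then to fix the order of the two columns by comparing endpoints.

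\textbf{Step 1: $x$ is a column maximum.} Suppose $x\neq\operatorname{Max}(i)$. Then $x=C_s-di-ey_0$ with $y_0>L_i$. The point $(i,y_0-1)$ lies in $R_s$, so $x+e\in\mathcal L(s)$. Since $x<x+e<y$, this contradicts the consecutiveness of $x$ and $y$. Hence $x=\operatorname{Max}(i)$.

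\textbf{Step 2: $y$ is a column minimum.} Symmetrically, suppose $y\neq\operatorname{Min}(j)$. Then $y=C_s-dj-ey_1$ with $y_1<U_j$, and $(j,y_1+1)\in R_s$. This gives $y-e\in\mathcal L(s)$ with $x<y-e<y$, again a contradiction. Hence $y=\operatorname{Min}(j)$. This step uses only that each column is an unbroken interval of $y$-values, which is immediate from the description of $R_s$ in the lattice parametrization lemma.

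\textbf{Step 3: $i\neq j$.} If $i=j$, then $x$ and $y$ lie in the same progression with difference $e$. Consecutive elements of $\mathcal L(s)$ inside one column differ by at most $e$, so $y-x>e$ is impossible unless an intermediate point of the column lies between them. That point would contradict consecutiveness. So $i\neq j$.

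\textbf{Step 4: $j<i$.} This is the main obstacle. The goal is to show that when $j>i$, the minimum $\operatorname{Min}(j)$ cannot exceed $\operatorname{Max}(i)$ by more than $e$ while nothing of $\mathcal L(s)$ lies in between. The plan is to use monotonicity of the column endpoints in $j$.

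\emph{Behaviour of $\operatorname{Max}$.} We have
\[
\operatorname{Max}(j+1)-\operatorname{Max}(j)=-d-e\bigl(L_{j+1}-L_j\bigr),
\]
and $L_{j+1}-L_j=\lceil(\mu_s+a(j+1))/k\rceil-\lceil(\mu_s+aj)/k\rceil\ge 0$. So $\operatorname{Max}$ is strictly decreasing in $j$.

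\emph{Behaviour of $\operatorname{Min}$.} For unconstrained columns, $U_j=\mu_s+aj$, so
\[
\operatorname{Min}(j+1)-\operatorname{Min}(j)=-d-ea<0.
\]
For constrained columns,
\[
U_j=\left\lfloor\frac{s-d(\mu_s+aj)}{ah}\right\rfloor
\]
decreases as $j$ grows, so $\operatorname{Min}(j)=C_s-dj-eU_j$ need not be monotone there. By Lemma~\ref{lem:column-threshold}, unconstrained columns precede constrained ones.

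\emph{Case analysis.} Now assume $j>i$ and $\operatorname{Min}(j)>\operatorname{Max}(i)+e$.
\begin{itemize}
\item If column $j$ is unconstrained, then so is column $i$, and $\operatorname{Min}(i)>\operatorname{Min}(j)$ while $\operatorname{Max}(i)<\operatorname{Min}(j)$. Hence column $i$ spans the interval $(\operatorname{Max}(i),\operatorname{Min}(j))$ from below and above, which is impossible since $\operatorname{Max}(i)\ge\operatorname{Min}(i)$. This gives a contradiction.
\item If column $j$ is constrained, I would compare column $j$ with column $j-1$. Using the floor in $U_j$, one has
\[
U_{j-1}-U_j\le \left\lceil\frac{d}{ah}\right\rceil,
\]
and I would try to show that $[\operatorname{Min}(j-1),\operatorname{Max}(j-1)]$ covers a point strictly between $\operatorname{Max}(i)$ and $\operatorname{Min}(j)$. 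Iterating this down to column $i$ would then contradict consecutiveness.
\end{itemize}
This last covering estimate is where I expect the real work. The first thing I would try is the bound $\operatorname{Min}(j-1)\le \operatorname{Min}(j)+e$ in the constrained regime, derived from $ah>d$ or handled separately when that fails. Together with $\operatorname{Max}(j-1)>\operatorname{Max}(j)$, this should force an intermediate length of $\mathcal L(s)$ strictly between $x$ and $y$, completing the proof that $j<i$.
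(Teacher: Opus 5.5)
Your Steps 1--3 match the paper's argument, but Step 4 is not a proof. The constrained case is only a plan (``I would try to show\ldots''), and the estimates it relies on are wrong.

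In the constrained regime, $U_j=B_j=\lfloor (C_s-dj)/h\rfloor$, because the numerator $s-d(\mu_s+aj)$ drops by $ad$ per step. So $U_{j-1}-U_j\in\{\lfloor d/h\rfloor,\lceil d/h\rceil\}$, not $\le\lceil d/(ah)\rceil$. Consequently
\[
\operatorname{Min}(j-1)-\operatorname{Min}(j)=d-e\,(U_{j-1}-U_j)
\]
is roughly $d/h$, and it can exceed $e$. For example, with $a=3$, $k=2$, $h=2$, $d=7$ it equals $3$ or $4$, while $e=1$. So the inequality $\operatorname{Min}(j-1)\le\operatorname{Min}(j)+e$, on which your column-by-column covering iteration rests, fails in general, and that route does not close.

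The missing observation is that you do not need $\operatorname{Min}$ at all. You already showed that $\operatorname{Max}$ is strictly decreasing in the column index, and every nonempty column has $\operatorname{Min}(j)\le\operatorname{Max}(j)$. Once Steps 1--2 give $x=\operatorname{Max}(i)$ and $y=\operatorname{Min}(j)$, assuming $i\le j$ yields
\[
y=\operatorname{Min}(j)\le\operatorname{Max}(j)\le\operatorname{Max}(i)=x,
\]
which contradicts $x<y$. This is exactly the paper's one-line finish: $\operatorname{Max}(i)=x<y=\operatorname{Min}(j)\le\operatorname{Max}(j)$ is impossible for $i<j$.

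This argument handles $i=j$ at the same time, so your separate Step 3 and the constrained/unconstrained case split become unnecessary. Your unconstrained-case argument is correct but superfluous.
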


\begin{proof} Suppose first that
\(
x<\operatorname{Max}(i).
\)
Since $C_i$ is an arithmetic progression of common difference $e$,
the element $x+e$ also belongs to $C_i$. Since $y-x>e$, we have
\(x<x+e<y,\)
contradicting the assumption that $x$ and $y$ are consecutive.
Therefore
\(
x=\operatorname{Max}(i).
\) Similarly, if
\(
y>\operatorname{Min}(j),
\)
then $y-e\in C_j$ and
\(
x<y-e<y,
\)
again contradicting consecutiveness. Hence
\(y=\operatorname{Min}(j).\)

\(\operatorname{Max}(i)=x<y=\operatorname{Min}(j)
\leq \operatorname{Max}(j).\)
But this is impossible when $i<j$. Consequently,
\(
j<i.
\)
\end{proof}

\begin{corollary}[Non-adjacent Bound]
\label{cor:gapbound}
Let $x<y$ be consecutive elements of $\mathcal L(s)$ with
\[
x\in C_i,\qquad y\in C_j,\qquad i<j.
\]
Then
\[
0<y-x\leq e.
\]
\end{corollary}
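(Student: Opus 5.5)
The corollary should follow from Lemma~\ref{lem:trunc}, applied in its contrapositive form. Let $x<y$ be consecutive elements of $\mathcal L(s)$ with $x\in C_i$, $y\in C_j$ and $i<j$. Positivity $y-x>0$ is immediate from $x<y$, so the content is the upper bound $y-x\le e$.

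Suppose instead that $y-x>e$. The hypotheses of Lemma~\ref{lem:trunc} then hold, and it gives three conclusions: $x=\operatorname{Max}(i)$, $y=\operatorname{Min}(j)$, and $j<i$. The last of these contradicts $i<j$, so $y-x\le e$.

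If I wanted the argument self-contained rather than resting on the final clause of the lemma, I would supply the monotonicity step directly. This is the only place where care is needed. The inequality $x<y$ together with $x=\operatorname{Max}(i)$ and $y=\operatorname{Min}(j)\le\operatorname{Max}(j)$ gives
\[
\operatorname{Max}(i)<\operatorname{Max}(j).
\]
It then remains to show that $\operatorname{Max}$ is nonincreasing in the column index. From the formula $\operatorname{Max}(j)=C_s-dj-eL_j$ and the fact that $L_j=\lceil(\mu_s+aj)/k\rceil$ is nondecreasing in $j$, we get
\[
\operatorname{Max}(j+1)-\operatorname{Max}(j)=-d-e\,(L_{j+1}-L_j)<0 .
\]
So $\operatorname{Max}$ is strictly decreasing in $j$, and $i<j$ forces $\operatorname{Max}(i)>\operatorname{Max}(j)$. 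This contradicts the displayed inequality, so the case $y-x>e$ cannot occur, which proves $0<y-x\le e$.
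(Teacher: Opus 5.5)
Your proof is correct and is essentially the paper's own: the paper also notes $y-x>0$ and shows that $y-x>e$ would, by Lemma~\ref{lem:trunc}, force $j<i$, contradicting $i<j$. Your extra computation $\operatorname{Max}(j+1)-\operatorname{Max}(j)=-d-e(L_{j+1}-L_j)<0$ supplies the monotonicity of $\operatorname{Max}$ that the lemma's proof uses without justification (it only says the situation is ``impossible when $i<j$'').
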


\begin{proof}
Clearly $y-x>0$. If $y-x>e$, Lemma~\ref{lem:trunc} would imply
$j<i$, contradicting $i<j$. Hence
\(
0<y-x\leq e.
\)
\end{proof}
We next consider the cases of non-adjacent columns with consecutive lengths. Our goal is to force these columns to be consecutive in nature
For $j\geq1$, define
\(
m_j=\left\lceil\frac{dj}{e}\right\rceil
\)
and
\(
o_j=em_j-dj.
\), therefore measuring the "error", or how far $dj$ is from a multiple of $e$
Thus
\[
o_j=(-dj)\bmod e\in\{0,1,\ldots,e-1\}.
\]

\begin{lemma}[Survival of Truncated Columns]

\label{lem:survival}
Let $x<y$ be consecutive elements of $\mathcal L(s)$ with
\[
x\in C_i,\qquad y\in C_j,\qquad J=j-i\geq1.
\]
Then, for every $1\leq t\leq J$,
\[
L_{i+t}\leq \ell_i-m_t\leq U_{i+t}.
\]
Consequently,
\[
x+o_t
=
C_s-d(i+t)-e(\ell_i-m_t)
\in C_{i+t}\subseteq\mathcal L(s).
\]
\end{lemma}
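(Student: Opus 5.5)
The plan is to read $\ell_i$ as the $y$-coordinate of the lattice point in column $i$ realizing $x$, so that $x=C_s-di-e\ell_i$ with $L_i\le \ell_i\le U_i$. Write $y_j$ for the $y$-coordinate realizing $y$ in column $j$. The membership claim then follows from a one-line computation once the bounds hold:
\[
C_s-d(i+t)-e(\ell_i-m_t)=x-dt+em_t=x+o_t .
\]
So the work is to verify $L_{i+t}\le \ell_i-m_t\le U_{i+t}$ for $1\le t\le J$. I would split this into the lower bound, the ``unconstrained'' part of the upper bound, and the ``budget'' part of the upper bound.

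\emph{Locating $y_j$.} Since $i<j$, Corollary~\ref{cor:gapbound} gives $0<y-x\le e$. We have $y-x=-dJ-e(y_j-\ell_i)$, which is congruent to $-dJ\equiv o_J \pmod e$. Hence $y-x=o_J$ if $o_J\neq0$, and $y-x=e$ if $o_J=0$. Solving for $y_j$ gives $y_j=\ell_i-m_J$ in the first case and $y_j=\ell_i-m_J-1$ in the second. In either case $y_j\le \ell_i-m_J$.

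\emph{Lower bound.} The sequence $L_{i+t}=\lceil(\mu_s+a(i+t))/k\rceil$ is nondecreasing in $t$, and $m_t=\lceil dt/e\rceil$ is nondecreasing in $t$. Therefore
\[
L_{i+t}\le L_j\le y_j\le \ell_i-m_J\le \ell_i-m_t .
\]

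\emph{Upper bound, unconstrained part.} From $\ell_i\le U_i\le \mu_s+ai$ we get $\ell_i-m_t\le \mu_s+a(i+t)$.

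\emph{Upper bound, budget part.} This is the step I expect to be the real obstacle, since $U_j$ is not monotone in $j$ and Lemma~\ref{lem:column-threshold} alone does not settle it. The condition $\ell_i-m_t\le \lfloor (s-d(\mu_s+a(i+t)))/(ah)\rfloor$ says exactly that the number of copies of $a$ in the new factorization is nonnegative. I would rewrite that count as
\[
\frac{s-d(\mu_s+a(i+t))-ah(\ell_i-m_t)}{a}
=\frac{s-d(\mu_s+ai)-ah\ell_i}{a}-dt+hm_t .
\]
The first term is the (nonnegative) number of copies of $a$ in the factorization giving $x$. For the remaining terms, $m_t\ge dt/e$ gives $hm_t\ge hdt/e> dt$ because $h>e$, so $-dt+hm_t>0$. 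Hence the count is nonnegative, and integrality gives the floor bound.

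Combining the three parts shows that $(i+t,\ell_i-m_t)\in R_s$. The displayed computation then places $x+o_t$ in $C_{i+t}\subseteq\mathcal L(s)$.
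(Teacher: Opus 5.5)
Your proof is correct and has the same structure as the paper's: the identity $x+o_t=C_s-d(i+t)-e(\ell_i-m_t)$; the lower bound from $y_j\le \ell_i-m_J$ plus monotonicity of $L_r$ and $m_t$; and the upper bound split into the $\mu_s+a(i+t)$ part and the budget part. The differences are minor. You reach $y_j\le\ell_i-m_J$ through Corollary~\ref{cor:gapbound} and a congruence, whereas the paper gets it directly from $x<y$, i.e.\ $e(\ell_i-y_j)>dJ$. For the budget bound you track the exact count of copies of $a$, which increases by $hm_t-dt\ge 0$, whereas the paper uses the floor estimate $B_i-B_{i+t}\le\lfloor dt/h\rfloor+1\le m_t$.
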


\begin{proof}
The identity follows directly from \[
\begin{aligned}
C_s-d(i+t)-e(\ell_i-m_t)
&=
C_s-di-e\ell_i-dt+em_t\\
&=
x+o_t.
\end{aligned}
\]
The inequality $L_{i+t} \leq \ell_i-m_t \leq U_{i+t}$ ensures that $ x+ o_t$ is an allowable value, and it therefore remains necessary to prove that $ \ell_i -m_t$ is within the allowable length values.

We first prove the upper bound.

Since
\(
\ell_i\leq U_i\leq\mu+ai,
\)
we have
\(
\ell_i-m_t<\mu+a(i+t),
\)
and hence
\[
\ell_i-m_t\leq\mu+a(i+t). \tag{10.3}
\]

Also,
\(
\ell_i\leq U_i\leq B_i.
\)
Furthermore,
\[
\begin{aligned}
B_i-B_{i+t}
&=
\left\lfloor\frac{C_s-di}{h}\right\rfloor
-
\left\lfloor\frac{C_s-di-dt}{h}\right\rfloor\\
&\leq
\left\lfloor\frac{dt}{h}\right\rfloor+1\\
&\leq
\left\lceil\frac{dt}{e}\right\rceil\\
&=
m_t.
\end{aligned}
\]
Therefore
\[
B_i-m_t\leq B_{i+t},
\]
and hence
\[
\ell_i-m_t
\leq B_i-m_t
\leq B_{i+t}. \tag{9.4}
\]
Combining (9.3) and (9.4) yields
\[
\ell_i-m_t\leq U_{i+t}.
\]

We now prove the lower bound.

Since $x<y$,
\(
-dJ+e(\ell_i-\ell_j)>0.
\)
Thus
\(
e(\ell_i-\ell_j)>dJ,
\)
so
\(
\ell_i-\ell_j
\geq
\left\lceil\frac{dJ+1}{e}\right\rceil.
\)
Because
\(
\left\lceil\frac{dJ+1}{e}\right\rceil
\geq
\left\lceil\frac{dJ}{e}\right\rceil
=m_J,
\)
we obtain
\(
\ell_i-m_J\geq\ell_j.
\)
Since $L_r$ is nondecreasing in $r$
we have
\(
L_{i+J}=L_j\leq\ell_j.
\)
Hence
\(
L_{i+J}\leq\ell_i-m_J.
\) More generally, for $1\leq t\leq J$, the same argument gives
\(
\ell_i-m_t\geq L_{i+t}.
\)
Therefore
\[
L_{i+t}\leq\ell_i-m_t\leq U_{i+t},
\]
\end{proof}

\begin{proposition}[Non-adjacent Columns]
\label{prop:nonadj}
Let $x<y$ be consecutive elements of $\mathcal L(s)$ with
\[x\in C_i,\qquad y\in C_j,\qquad i<j.
\]
Set
\(
J=j-i,\qquad \delta=y-x.
\)
Then
\(
0<\delta\leq e.
\)

If $e\nmid d$, then
\[
\delta
=
\min
\left\{
(-dt)\bmod e:
1\leq t\leq J,\ e\nmid dt
\right\}.
\]

In particular, $\delta$ is the first positive gap above $0$ in the
finite rotation set
\[
P_J=\{(-dt)\bmod e:0\leq t\leq J\},
\]
and hence
\[
\boxed{\delta\in E(e,d).}
\]
\end{proposition}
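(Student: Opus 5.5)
The bound $0<\delta\le e$ is exactly Corollary~\ref{cor:gapbound}, so the work is to identify $\delta$ exactly. I would combine Lemma~\ref{lem:survival} with the column structure.

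Write $x=C_s-di-e\ell_i$ with $(i,\ell_i)$ a lattice point. Lemma~\ref{lem:survival} says that for every $1\le t\le J$ the length $x+o_t$ lies in $C_{i+t}\subseteq\mathcal L(s)$, where $o_t=(-dt)\bmod e$.

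\textbf{Upper bound.} Take any $t\le J$ with $e\nmid dt$. Then $0<o_t<e$, so $x+o_t$ is a length strictly above $x$. Since $x$ and $y$ are consecutive, $y\le x+o_t$. Hence
\[
\delta\le \min\{o_t: 1\le t\le J,\ e\nmid dt\}.
\]
This minimum is over a nonempty set, because $e\nmid d$ makes $o_1>0$.

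\textbf{Lower bound.} I would show that $\delta$ is itself one of these $o_t$. Every length in column $i+t$ is congruent to $C_s-d(i+t)$, hence to $x+o_t$, modulo $e$. Take $t=J$: we have $y\in C_j=C_{i+J}$, so $y\equiv x+o_J\pmod e$. Combined with $0<\delta\le e$, this gives $\delta=o_J$ when $o_J\neq0$, and $\delta=e$ when $o_J=0$.

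The case $o_J=0$ must be excluded. In that case $x+o_J=x$ would itself lie in $C_j$, and $y=x+e$. I would then use $e\nmid d$: $o_1>0$ gives the length $x+o_1\in(x,x+e)$, which contradicts consecutiveness. So $\delta=o_J$ with $e\nmid dJ$, and with the upper bound,
\[
\delta=\min\{o_t:1\le t\le J,\ e\nmid dt\}.
\]

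\textbf{Membership in $E(e,d)$.} The minimum equals the smallest positive element of $P_J=\{(-dt)\bmod e:0\le t\le J\}$, which is the gap above $0$. The set $P_J$ is the rotation set for the rotation by $-d$. Replacing $\rho$ by $e-\rho$ preserves the gap structure (reflect the circle), and $E(e,\rho)=E(e,e-\rho)$ up to the first subtractive step. So Proposition~\ref{prop:euclidean-gap} shows that every gap in $P_J$, in particular this one, lies in $E(e,\rho)=E(e,d)$.

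\textbf{Main obstacle.} The delicate point is the reflection symmetry $E(e,\rho)=E(e,e-\rho)$. One must check that the first step $(e,e-\rho)\mapsto(\rho,e-\rho)$ introduces no value outside $E(e,\rho)$. I would try to verify this directly: the values $e-\rho$ and $\rho$ already appear in the subtractive algorithm on $(e,\rho)$ within the first steps. Alternatively, apply Proposition~\ref{prop:euclidean-gap} to the points $\{t\rho\bmod e\}$, which are the negatives of the $o_t$, so their gap lengths are identical.
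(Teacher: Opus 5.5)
Your proposal is correct and follows the paper's proof essentially step for step. The survival lemma gives $o_t=0$ or $o_t\ge\delta$; the congruence $\delta\equiv -dJ \pmod e$ gives $\delta=o_J$ once $\delta<e$ is known; and membership in $E(e,d)$ comes from the three-gap proposition. You are more explicit than the paper in two places: you derive $\delta<e$ from the $t=1$ survival point, and you handle the rotation by $-d$ rather than $\rho$ by reflecting the circle. Your ``obstacle'' is immediate, since both $E(e,\rho)$ and $E(e,e-\rho)$ equal $\{e\}\cup E(\rho,e-\rho)$. Also, only the inclusion $E(e,\rho)\subseteq E(e,d)$ is needed. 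The equality you cite from the paper fails when $d>e$, because then $d\in E(e,d)$ while every element of $E(e,\rho)$ is at most $e$.
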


\begin{proof}
The inequality
\[
0<\delta\leq e
\]
follows from Corollary~\ref{cor:gapbound}.

Suppose that
\[
e\nmid d.
\]
For $1\leq t\leq J$, Lemma~\ref{lem:survival} gives
\[
x+o_t\in\mathcal L(s),
\]
where
\[
o_t=(-dt)\bmod e.
\]
Since $x$ and $y$ are consecutive and $x+o_t\geq x$, we must have
either
\(
o_t=0
\qquad\text{or}\qquad
x+o_t\geq y.
\)
Equivalently,
\[
o_t=0
\qquad\text{or}\qquad
o_t\geq\delta. \tag{9.5}
\]

On the other hand,
\[
\delta
=
y-x
=
-dJ+e(\ell_i-\ell_j),
\]
so
\[
\delta\equiv-dJ\pmod e.
\]
Thus
\[
o_J=\delta\bmod e. \tag{9.6}
\]

Since $e\nmid d$, the previous argument shows that
\[
\delta<e.
\]
Consequently, by (10.6),
\[
o_J=\delta>0.
\]
Combining this with (9.5), we obtain
\[
o_t=0
\qquad\text{or}\qquad
o_t\geq\delta
\]
for every $1\leq t\leq J$, while $o_J=\delta$. Hence
\[
\delta
=
\min
\{o_t:1\leq t\leq J,\ o_t>0\}.
\]
Since
\[
o_t=(-dt)\bmod e,
\]
this gives
\[
\delta
=
\min
\left\{
(-dt)\bmod e:
1\leq t\leq J,\ e\nmid dt
\right\}.
\]

Thus $\delta$ is the first positive gap above $0$ in the finite rotation
set
\[
P_J=\{(-dt)\bmod e:0\leq t\leq J\}.
\]
By Proposition~\ref{prop:euclidean-gap}, every gap in such a finite
rotation set belongs to $E(e,d)$. Therefore
\[
\delta\in E(e,d).
\]
\end{proof}

\begin{corollary}[Non-adjacent Gaps Are Euclidean]
\label{cor:nonadj-euclidean}
If $x<y$ are consecutive elements of $\mathcal L(s)$ lying in
non-adjacent columns
\[
i<j,\qquad j-i\geq2,
\]
then
\[
\boxed{y-x\in E(e,d).}
\]
\end{corollary}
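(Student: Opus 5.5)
The statement follows directly from Proposition~\ref{prop:nonadj}, once we check that its hypotheses hold and handle the divisible case $e\mid d$ separately. Let $x<y$ be consecutive elements of $\mathcal L(s)$ with $x\in C_i$, $y\in C_j$, $i<j$ and $J=j-i\geq 2$.

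First, Corollary~\ref{cor:gapbound} gives $0<y-x\leq e$.

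Suppose $e\nmid d$. Then Proposition~\ref{prop:nonadj} applies verbatim and yields
\[
y-x=\min\{(-dt)\bmod e:1\leq t\leq J,\ e\nmid dt\}.
\]
This is the smallest positive point of the rotation set $P_J$, that is, the gap adjacent to $0$. By Proposition~\ref{prop:euclidean-gap}, every such gap lies in $E(e,\rho)=E(e,d)$. The point to verify is that $\{(-dt)\bmod e\}$ is the reflection of $\{t\rho\bmod e\}$ under $u\mapsto e-u$. Reflection preserves the multiset of cyclic gaps, so the three-gap identification carries over unchanged. I expect the argument is fine in this case.

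Suppose $e\mid d$, say $d=ce$. Then every column has residue $C_s\bmod e$, so all lengths in $\mathcal L(s)$ are congruent modulo $e$. Hence $e\mid y-x$, and combined with $0<y-x\leq e$ this forces $y-x=e$. Since $e\in E(e,d)$ (the subtractive algorithm on $(e,ce)$ produces $e$), this case is settled.

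The main obstacle is that Proposition~\ref{prop:nonadj} relies on Lemma~\ref{lem:survival}, and I would check that its lower-bound step is valid. The inequality $L_{i+t}\leq \ell_i-m_t$ for intermediate $t<J$ is asserted "by the same argument," but that argument used the actual point $\ell_j$ in column $j$.

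My fix is to use the defining feature of the generators, namely that every column's interval is $[\lceil m/k\rceil,\,U]$. First compare $L_{i+t}$ with $L_j\leq \ell_j\leq \ell_i-m_J$. Then compare $m_J$ with $m_t$, using $m_J\geq m_t$ together with the monotonicity $L_{i+t}\leq L_j$. These give
\[
L_{i+t}\leq L_j\leq \ell_j\leq \ell_i-m_J\leq \ell_i-m_t,
\]
which is exactly what is needed. With that chain verified, the corollary follows.
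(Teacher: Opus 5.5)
Your proposal is correct and follows the paper's route: the paper proves the corollary in one line as an immediate consequence of Proposition~\ref{prop:nonadj}, which is your reduction for $e\nmid d$, and your chain $L_{i+t}\le L_j\le \ell_j\le \ell_i-m_J\le \ell_i-m_t$ supplies what ``the same argument'' in Lemma~\ref{lem:survival} leaves unstated. Your separate treatment of $e\mid d$ is a genuine addition: all lengths are congruent to $C_s$ modulo $e$, so $0<y-x\le e$ forces $y-x=e\in E(e,d)$, and this case is needed because Proposition~\ref{prop:nonadj} only draws its conclusion for $e\nmid d$ while the paper's proof does not address it.
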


\begin{proof}
This is an immediate consequence of Proposition~\ref{prop:nonadj}.
\end{proof}

\section{Forward Containment}
\label{sec:forwardcontainment}
\begin{theorem}[Forward Containment]
\label{thm:forward}
\[\Delta(S_h)\subseteq E(e,d) \cup \left\{M-re:r\geq0,\ M-re>0 \right\}.\]
\end{theorem}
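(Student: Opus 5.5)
Let $\delta\in\Delta(S_h)$. Then there is some $s\in S_h$ and consecutive lengths $x<y$ in $L(s)$ with $\delta=y-x$. Using the lattice parametrization of Section~\ref{sec:lattice}, I write $x\in C_i$ and $y\in C_j$ and split according to the relative position of the columns. There are four cases: $i=j$, $i<j$, $j<i$ with $i=j+1$, and $j\le i-2$.

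\textbf{Case $i=j$.} Inside a single column the lengths form an arithmetic progression with step $e$. So $\delta=e$, and $e\in E(e,d)$ since the subtractive algorithm starts from $(e,d)$.

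\textbf{Case $i<j$.} Corollary~\ref{cor:gapbound} gives $0<\delta\le e$. If $e\mid d$, every length is congruent to $C_s$ modulo $e$, which forces $\delta=e$. Otherwise Proposition~\ref{prop:nonadj} gives $\delta\in E(e,d)$.

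\textbf{Case $j<i$.} If $\delta\le e$, I would rerun the survival argument of Lemma~\ref{lem:survival} with the roles of the columns reversed. This should show that $\delta$ is again a minimal positive residue of the form $(\pm dt)\bmod e$, hence a three-gap value, so $\delta\in E(e,d)$ by Proposition~\ref{prop:euclidean-gap}. If $\delta>e$, Lemma~\ref{lem:trunc} forces $x=\operatorname{Max}(i)$ and $y=\operatorname{Min}(j)$.

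\textbf{Subcase $j\le i-2$.} Here I would exhibit a length of the intermediate column $C_{j+1}$ lying strictly between $x$ and $y$. This contradicts consecutiveness, or else reduces the situation to the adjacent case.

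\textbf{Subcase $i=j+1$.} We have
\[
\delta=\operatorname{Min}(j)-\operatorname{Max}(j+1)=d+e(L_{j+1}-U_j).
\]
If $C_j$ is unconstrained, Lemma~\ref{lem:boundary-difference-index-mapping} gives $\delta=M-e\widetilde r(m_j)$ with $\widetilde r(m_j)\ge0$, and $\delta>0$. Thus $\delta$ lies in the boundary family.

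If $C_j$ is truncated, then $U_j=B_j<m_j$. This makes $L_{j+1}-U_j$ larger than in the unconstrained situation, so $\delta$ is still $\equiv d\pmod e$, but $\delta=M-re$ might fail $r\ge0$. I would bound it using $\delta\le\max\Delta(S_h)=M$ (Theorem on the Maximal Delta). Since $M\equiv d\pmod e$, any $\delta\equiv d\pmod e$ with $0<\delta\le M$ automatically has the form $M-re$ with $r\ge0$. This observation actually handles every adjacent pair at once, truncated or not.

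\textbf{Main obstacle.} The hard part is the non-adjacent case $j\le i-2$ with $\delta>e$. One must show that some intermediate column contributes a length inside $(x,y)$, or else that $\delta$ is still $\equiv d\pmod e$ and hence lies in the boundary family.

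My first attempt is the following. The extrema $\operatorname{Max}$ and $\operatorname{Min}$ move monotonically in $j$ as long as the columns are nonempty, by the threshold Lemma~\ref{lem:column-threshold}. I would show that column $j+1$ is nonempty and satisfies $\operatorname{Min}(j+1)<y$. Then either $\operatorname{Max}(j+1)\ge y$, so some element of $C_{j+1}$ lands in $(x,y)$ because that column has step $e<\delta$, or $\operatorname{Max}(j+1)<y$. In the latter case, consecutiveness forces $\operatorname{Max}(j+1)\le x$, and the pair $(\operatorname{Max}(j+1),y)$ is adjacent. The pair $(x,y)$ then inherits its congruence mod $e$ via the same survival shift $o_t$.

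If this route fails, the fallback is to use that every length is $\equiv C_s-dj\pmod e$. With that, a gap larger than $e$ between non-adjacent columns can be compared directly against the adjacent ones by the three-gap structure.
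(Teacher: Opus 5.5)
\textbf{There is a genuine gap: both cases with the larger length in a column to the left, $j<i$, are left unproved, and the route you sketch for the hard one fails.} What you do have is fine. The same-column case and the $i<j$ case via Corollary~\ref{cor:gapbound} and Proposition~\ref{prop:nonadj} work. Your congruence trick for adjacent pairs is also sound: for $i=j+1$ one has $\delta=d+e(\ell_{j+1}-\ell_j)\equiv d\equiv M\pmod e$, and $0<\delta\le M$ then forces $\delta=M-re$ with $r\ge 0$, truncated or not. This is an improvement on the paper's proof, which only cites Lemma~\ref{lem:boundary-difference-index-mapping} (two \emph{unconstrained} columns) and Proposition~\ref{prop:nonadj} (smaller length in the left column), and never treats $j<i$ in general.

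\textbf{Non-adjacent case with $\delta>e$ ($j\le i-2$).}
\begin{itemize}
\item Your plan rests on $\operatorname{Min}$ being monotone, which is false. On a constrained column, $\operatorname{Min}(c)=\phi(C_s-dc)$ with $\phi(N)=N-e\lfloor N/h\rfloor$. For $h=3$, $d=1$ this gives $\phi(30)=10<\phi(29)=11$.
\item Likewise, if $j$ is unconstrained with $C_s-dj=3(\mu_s+aj)$, then $\operatorname{Min}(j+1)=\operatorname{Min}(j)+1$. So $\operatorname{Min}(j+1)<y$ cannot come from local monotonicity.
\item Your other branch is empty. $\operatorname{Max}$ is strictly decreasing, so $\operatorname{Max}(j+1)>\operatorname{Max}(i)=x$.
\item The ``inherited congruence'' is unfounded, since here $\delta\equiv d(i-j)\pmod e$, not $d$. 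The fallback is not an argument.
\end{itemize}

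\textbf{Case $\delta<e$ with $j<i$.} Rerunning Lemma~\ref{lem:survival} verbatim fails. The shifted point $(i-t,\ell_i+\lfloor dt/e\rfloor)$ can violate the budget bound $h\ell+dc\le C_s$.

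\textbf{How to close $\delta>e$, $j\le i-2$.} Lemma~\ref{lem:trunc} gives $x=\operatorname{Max}(i)$ and $y=\operatorname{Min}(j)$.
\begin{itemize}
\item For $j<c<i$ we have $\operatorname{Max}(c)>x$. Since the column step is $e<\delta$, consecutiveness forces $\operatorname{Min}(c)\ge y$.
\item Column $j+1$ cannot be unconstrained, since then $\operatorname{Min}(j+1)=y-d-ae<y$. Hence columns $j+1,\dots,i$ are all constrained.
\item Write $d=hu+v$ with $0\le v<h$. Then $\phi(N-d)-\phi(N)\in\{-(u+v),\,e-(u+v)\}$.
\item If $u+v\le e$: $x\ge\operatorname{Min}(i)\ge\operatorname{Min}(i-1)-e\ge y-e$, a contradiction. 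Note the obstruction sits at column $i-1$, not $j+1$.
\item If $u+v>e$: $\phi$ strictly decreases, so $\operatorname{Min}(j+1)<y$ when $j$ is constrained. When $j$ is unconstrained, $C_s-dj\ge h(\mu_s+aj)$ gives $\operatorname{Min}(j+1)\le y-d+e\lceil d/h\rceil<y$. Either way we have a contradiction.
\end{itemize}
So this case never occurs.

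\textbf{How to close $\delta<e$, $j<i$.} Put $X=(i,\ell_i)$ and $Y=(j,\ell_j)$, so that $\ell_j=\ell_i+\lfloor d(i-j)/e\rfloor$. Suppose some $1\le t<i-j$ has $0<r_t<\delta$, where $r_t=dt\bmod e$.
\begin{itemize}
\item Take $A=(i-t,\ell_i+\lfloor dt/e\rfloor)$ and pair it with $B=X+Y-A$.
\item Both points automatically satisfy $\ell\ge L_c$ and $\ell\le\mu_s+ac$.
\item The budget form is affine and nonpositive at $X$ and $Y$, so it can fail at no more than one of $A$ and $B$.
\item Their lengths are $x+r_t$ and $y-r_t$, both in $(x,y)$, contradicting consecutiveness.
\end{itemize}
Hence $\delta=\min\{dt\bmod e>0:1\le t\le i-j\}$. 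Proposition~\ref{prop:euclidean-gap} then gives $\delta\in E(e,d)$.
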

\begin{proof}
Let
\[ \delta\in\Delta(S_h). \]
Then there exist consecutive factorization lengths
\[ \ell_1<\ell_2 \]
in some length set $\mathcal{L}(s)$ such that
\[ \delta=\ell_2-\ell_1. \]

Let $P$ and $Q$ be the corresponding lattice points in columns
$i$ and $j$, respectively.

\medskip

\noindent
\textbf{Case 1: Same column.}

If $i=j$, then the two lengths differ by exactly $e$, since
successive elements in a column differ by $e$. Therefore,
\[ \delta=e\in E(e,d). \]

\medskip

\noindent
\textbf{Case 2: Adjacent columns.}

Suppose
\[j=i+1.\]

By Lemma~\ref{lem:boundary-difference-index-mapping}, if
\[P=\operatorname{Min}(i),\qquad Q=\operatorname{Max}(i+1),\]
then
\[\delta =M-re\]
for some $r\geq0$

\medskip

\noindent
\textbf{Case 3: Non-adjacent columns.}

If
\[
j-i\geq2,
\]
then 9.4. gives
\[
\delta
\in
E(e,d)
\cup
\left\{
M-re:r\geq0,\ M-re>0
\right\}.
\]

Therefore,
\[
\Delta(S_h)
\subseteq
E(e,d)
\cup
\left\{
M-re:r\geq0,\ M-re>0
\right\}.
\]
\end{proof}


\section{Main Theorem}
\label{sec:maintheorem}
\begin{theorem}[Delta Set Decomposition]
\label{thm:maintheorem}
\[
\boxed{
\Delta(S_h)
=
E(e,d)
\cup
\left\{
M-re:r\geq0,\ M-re>0
\right\}
}
\]
where
\[
e=h-1,
\qquad
M=
\left\lceil\frac{a}{k}\right\rceil(h-1)+d.
\]
\end{theorem}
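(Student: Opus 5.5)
The main theorem follows by double containment, assembling results already established. The reverse inclusion $E(e,d)\cup\{M-re:r\geq0,\ M-re>0\}\subseteq\Delta(S_h)$ is the union of Proposition~\ref{prop:boundary-family} and Corollary~\ref{corollary:Euclidean-Inclusion}. The forward inclusion is Theorem~\ref{thm:forward}. So the proof itself is a two-line combination. The work in writing it carefully lies in checking that the ingredients of each inclusion cover every case with no gaps.

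For the forward inclusion, I would revisit the case split of Theorem~\ref{thm:forward} using the lattice parametrization of Section~\ref{sec:lattice}. Let $\ell_1<\ell_2$ be consecutive lengths of $s$, with $\ell_1\in C_i$ and $\ell_2\in C_j$.

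\emph{Same column.} If $i=j$, then $\delta=e$, and $e\in E(e,d)$ because the subtractive algorithm starts from $(e,d)$.

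\emph{Gap at most $e$ between columns.} If $i\neq j$ and $\delta\le e$, then Proposition~\ref{prop:nonadj} gives $\delta\in E(e,d)$. This uses the rotation-set description, which needs $i<j$. When $e\mid d$, every $\ell$ is congruent to $C_s$ modulo $e$, so $\delta$ is a positive multiple of $e$ that is at most $e$, hence $\delta=e$.

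\emph{Gap larger than $e$.} If $\delta>e$, Lemma~\ref{lem:trunc} forces $\ell_1=\operatorname{Max}(i)$, $\ell_2=\operatorname{Min}(j)$ and $j<i$. In this case $\delta=d(i-j)+e(L_i-U_j)$. If $i=j+1$ and both columns are unconstrained, Lemma~\ref{lem:boundary-difference-index-mapping} gives $\delta=M-e\widetilde r(m_j)$ with $\widetilde r\geq 0$, so $\delta$ lies in the arithmetic family.

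The main obstacle is the remaining subcases of the last case: columns that are truncated ($U_j=B_j<\mu_s+aj$ past the threshold $j^\dagger$ of Lemma~\ref{lem:column-threshold}), and a jump from column $j$ back to $i\geq j+2$.

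\emph{Truncated adjacent columns.} I would first note that truncation only lowers $U_j$. Hence $\operatorname{Min}(j)$ computed with $U_j=B_j$ is at least the unconstrained value, which is what would make the difference a member of the family. I would then use consecutiveness to show the difference is not pushed above $e$ unless it equals $M-re$ for some $r$. Concretely, I would show that $\delta\equiv d\pmod e$, which always holds for adjacent columns. Together with $\delta\le M$ from the Maximal Delta theorem, this forces $\delta=M-re$ for some $r\ge0$ with $M-re>0$.

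\emph{Jumps across two or more columns.} These can be handled the same way. The analogue of Lemma~\ref{lem:survival}, run in the decreasing direction, places intermediate lengths inside $(\ell_1,\ell_2)$ unless $\delta\le e$, which returns us to the Euclidean case.

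I expect this congruence-plus-maximality argument to be the step needing the most care. In particular, one must check that $\delta\equiv d\pmod e$ holds for adjacent jumps even when the columns are truncated. With all cases covered, the two inclusions give the equality.
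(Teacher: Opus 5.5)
Your top-level argument is the paper's: the theorem is the conjunction of Theorem~\ref{thm:forward} with Proposition~\ref{prop:boundary-family} and Corollary~\ref{corollary:Euclidean-Inclusion}. Your audit of the forward inclusion is worthwhile.

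\emph{What works.} Your adjacent-column step is correct, and stronger than the paper's Case~2, which only treats unconstrained columns. Suppose $\ell_2\in C_j$ at height $y_2$ and $\ell_1\in C_{j+1}$ at height $y_1$. Then $\delta=d+e(y_1-y_2)\equiv d\equiv M\pmod e$. Together with $\delta\le M$, this puts $\delta$ in $\{M-re:r\ge 0,\ M-re>0\}$ whether or not the columns are truncated. You do not even need the Betti-element argument for $\delta\le M$. If $\delta>e$, then $\delta=d+e(L_{j+1}-U_j)\le d+e(L_{j+1}-L_j)\le d+e\lceil a/k\rceil=M$.

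\emph{The gap.} It is the backward case $\ell_2\in C_j$, $\ell_1\in C_i$ with $J=i-j\ge 2$, which you dispatch in one sentence. As you note yourself, Lemma~\ref{lem:survival} and Proposition~\ref{prop:nonadj} only handle $i<j$. So neither $\delta>e$ nor $\delta<e$ is covered here. The second subcase is a real issue: a priori $\delta\equiv dJ\pmod e$ could be, say, $2d\bmod e$, which need not lie in $E(e,d)$ (e.g.\ $4\notin E(5,2)$).

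Running Lemma~\ref{lem:survival} in the decreasing direction does not work as stated. Write $N_r=C_s-dr$, so $B_r=\lfloor N_r/h\rfloor$, and $dt=eq_t+r_t$ with $0\le r_t<e$. When $0<\delta<e$ one has $\delta=r_J$ and $y_2=y_1+q_J$. The natural point $y_1+q_t$ of column $i-t$ has length $\ell_1+r_t$. It satisfies every constraint except possibly $y_1+q_t\le B_{i-t}$, which is equivalent to $(N_i-hy_1)+r_t-q_t\ge 0$. This can fail: moving left, the budget grows only like $dt/h$ while the point grows like $dt/e$. Moving right, the bound $B_i-B_{i+t}\le m_t$ is exactly what makes the paper's lemma work.

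\emph{What is needed.} Two further arguments close the case.
\begin{enumerate}
\item For $0<\delta<e$: from $y_2\le B_j$ one gets $q_J\le (N_i-hy_1)+\delta$. Suppose some $t<J$ with $0<r_t<\delta$ is cut off, i.e.\ $q_t>(N_i-hy_1)+r_t$. Then $u=J-t$ has $r_u=\delta-r_t$ and $q_u=q_J-q_t<r_u$, so column $i-u$ contributes the length $\ell_1+r_u\in(\ell_1,\ell_2)$. Hence $\delta$ is the least positive element of $\{dt\bmod e:1\le t\le J\}$. That is a gap of the rotation set, so $\delta\in E(e,d)$ by Proposition~\ref{prop:euclidean-gap}.
\item For $\delta>e$: each intermediate column is a progression of step $e<\delta$ with maximum above $\ell_1$. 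Consecutiveness therefore forces $\operatorname{Min}(r)\ge\ell_2$ for $j<r<i$. Thus $\operatorname{Min}(j+1)\ge\operatorname{Min}(j)$ and $\operatorname{Min}(i-1)-\operatorname{Min}(i)>e$. The first inequality is impossible if column $j+1$ is unconstrained. Otherwise columns $i-1$ and $i$ are constrained, and the second inequality gives $B_{i-1}-B_i<d/e-1$. Every increment $B_r-B_{r+1}$ lies in $\{\lfloor d/h\rfloor,\lceil d/h\rceil\}$, so $B_j-B_{j+1}<d/e$. Hence $\operatorname{Min}(j)-\operatorname{Min}(j+1)\ge d-e(B_j-B_{j+1})>0$, a contradiction.
\end{enumerate}
Without these or equivalent arguments, your forward inclusion is incomplete. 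The same holds for the paper's Case~3, which only cites the $i<j$ result.
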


\begin{proof}
Theorem~\ref{thm:forward} gives the forward containment
\[
\Delta(S_h)
\subseteq
E(e,d)
\cup
\left\{
M-re:r\geq0,\ M-re>0
\right\}.
\]

For the reverse containment, Proposition~\ref{prop:boundary-family} gives
\[\left\{M-re:r\geq0,\ M-re>0\right\}\subseteq
\Delta(S_h),
\]
while Proposition~\ref{corollary:Euclidean-Inclusion} gives
\[E(e,d)\subseteq\Delta(S_h).\]

Combining these two containments yields
\[\boxed{\Delta(S_h)=E(e,d)\cup\left\{M-re:r\geq0,\ M-re>0\right\}.}\]
\end{proof}

\section{Bounds and Structural Relationships}

\begin{proposition}
\label{prop:basic-delta-values}
The values $d$ and $h-1$ belong to $\Delta(S_h)$.
\end{proposition}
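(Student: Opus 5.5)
The plan is to read both values off the main decomposition, Theorem~\ref{thm:maintheorem}. It suffices to show that $d$ and $e=h-1$ each lie in $E(e,d)\cup\{M-re:r\geq0,\ M-re>0\}$. Once that is done, the reverse containments (Proposition~\ref{prop:boundary-family} and Corollary~\ref{corollary:Euclidean-Inclusion}) put them in $\Delta(S_h)$.

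For $d$, I use the arithmetic family. Set $q=\lceil a/k\rceil$, so $M=qe+d$. Taking $r=q$ gives $M-qe=d>0$, hence $d\in\{M-re:r\geq0,\ M-re>0\}$ and so $d\in\Delta(S_h)$. As a direct check, independent of the Euclidean machinery, I would realize $d$ as in the proof of Proposition~\ref{prop:boundary-family}. Choose $m$ with $\widetilde r(m)=q$, which is possible by the surjectivity in Lemma~\ref{lem:boundary-difference-index-mapping}. Then take $s$ large with the matching residue $\mu_s$, so that the two relevant columns are unconstrained. Lemma~\ref{lem:consecutive-boundary-extrema} then makes the two extremal lengths consecutive.

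For $e$, I would argue directly rather than via $E(e,d)$, since the definition of $E$ needs care when $e\mid d$ or $e=1$. Take a single unconstrained column $C_j$ containing at least two lattice points, that is, $L_j<U_j$. Its lengths form an arithmetic progression $C_s-dj-ey$ with common difference $e$. For instance, pick $s$ large with $\mu_s=0$ and $j=1$, giving $L_1=q<a=U_1$ because $k\geq2$.

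It remains to show that two successive lengths in this column, $\ell$ and $\ell+e$, are consecutive in $L(s)$. This is the main obstacle, since another column could insert a length strictly between them. The first thing I would try is working near the top of the column, at $\operatorname{Max}(j)$ and $\operatorname{Max}(j)-e$ in the leftmost columns. There I would use the column ordering from Lemma~\ref{lem:consecutive-boundary-extrema}: columns to the right have smaller maxima, and by Lemma~\ref{lem:column-threshold} the columns to the left are also unconstrained. If interleaving persists, I would fall back on the set identity. When $e\nmid d$, the subtractive algorithm started at $(e,d)$ produces $e$ as an intermediate value, because the pair $(e,d\bmod e)$ appears. When $e\mid d$, the proposition in Section~\ref{sec:Euclidean} gives $e\in\{e,\dots,ce\}=E(e,d)$. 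Either way, Theorem~\ref{thm:maintheorem} yields $e=h-1\in\Delta(S_h)$.
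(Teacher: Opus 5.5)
Your argument is correct relative to the paper's earlier results. For $h-1$, your fallback is exactly the paper's proof, which notes $d,h-1\in E(e,d)$ and applies Corollary~\ref{corollary:Euclidean-Inclusion} to both values.

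The real difference is in how you get $d$. You write $d=M-qe$ with $r=q=\lceil a/k\rceil$ and use Proposition~\ref{prop:boundary-family}. So $d$ rests on an explicit pair of adjacent unconstrained columns with $L_{j+1}=U_j$, i.e.\ $\widetilde r(m_j)=q$. Their consecutiveness is checked column by column in Lemma~\ref{lem:consecutive-boundary-extrema}. The paper instead relies on the three-gap realization, whose ``no length in between'' step is argued only through the residues of the chosen window of columns. The paper's route is shorter and uniform; yours is more robust for $d$.

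You could have made the $h-1$ part equally robust, because the direct attempt you set aside actually closes. Take $\mu_s=0$ and $s$ large. Column $0$ contributes only the length $C_s$. Column $1$ contains $\operatorname{Max}(1)=C_s-d-eq$ and $\operatorname{Max}(1)-e$, since $q+1\le a=U_1$. Because $a/k>1$, we have $2a/k>a/k+1>q$, so $\lceil 2a/k\rceil\ge q+1$. Hence $\operatorname{Max}(2)\le\operatorname{Max}(1)-e-d$, and later columns lie lower still. Therefore $\operatorname{Max}(1)-e$ and $\operatorname{Max}(1)$ are consecutive in $L(s)$.

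An even simpler witness is $s=2(ah+d)=ha+(ah+2d)$, which has $L(s)=\{2,h+1\}$. Column $0$ gives $y\in\{1,2\}$. A point in any column $j\ge1$ would need both $hy+dj\le 2h$ and $y\ge\lceil (a+2)/k\rceil\ge 2$, which is impossible. Either way, the whole proposition becomes independent of the Euclidean machinery.
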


\begin{proof}
This proof proceeds trivially.
Since
\[d,h-1\in E(d,h-1),\]
and by the Euclidean containment established earlier,
\[E(d,h-1)\subseteq \Delta(S_h),\]
it follows that
\[d,h-1\in\Delta(S_h).\]
\end{proof}


\begin{corollary}[Case $h=2$]
If $h=2$, then
\(e=h-1=1,
\)
and hence
\[
E(M,h-1)=E(M,1)=\{1,2,\ldots,M\}.
\]
\end{corollary}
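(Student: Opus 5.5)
The corollary asserts that for $h=2$ one has $e=1$ and $E(M,h-1)=E(M,1)=\{1,2,\ldots,M\}$. My plan is to compute the subtractive Euclidean algorithm directly on the pair $(M,1)$. I will use the definition of $E(\cdot,\cdot)$ from Section~\ref{sec:preliminaries}: it is the set of all coordinate values appearing in the run.

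Starting from $(x_0,y_0)=(M,1)$, the rule $x_i>y_i$ applies as long as the first coordinate exceeds $1$. By induction on $i$, the $i$-th pair is therefore $(M-i,1)$ for $0\le i\le M-1$. The algorithm terminates at the pair $(1,1)$, where $x_i=y_i$. The values produced are exactly $M,M-1,\ldots,1$, together with the second coordinate $1$, which gives $\{1,\ldots,M\}$. Here $M=\lceil a/k\rceil+d\ge 2$, so the run is nondegenerate.

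I will then check this against Theorem~\ref{thm:maintheorem} as a consistency statement. With $e=1$, the arithmetic family is $\{M-r:r\ge0,\ M-r>0\}=\{1,\ldots,M\}$. Moreover, $E(1,d)$ is the run $(1,d)\to(1,d-1)\to\cdots\to(1,1)$, which gives $\{1,\ldots,d\}\subseteq\{1,\ldots,M\}$. Hence $\Delta(S_h)=\{1,2,\ldots,M\}$ as well, matching the ``equivalently $E(M,e)$'' form stated in the introduction.

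The only subtle point is convention. One must confirm that $E$ collects both coordinates at every step, including the terminal value $1$, rather than only the subtracted amounts. Under either reading the set is still $\{1,\ldots,M\}$, because every integer from $1$ to $M$ appears as a first coordinate.
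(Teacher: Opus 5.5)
Your argument is correct and is the same direct computation the paper leaves implicit: the corollary is stated without proof, as an immediate consequence of running the subtractive algorithm $(M,1)\to(M-1,1)\to\cdots\to(1,1)$. One small correction to your closing aside: if $E$ recorded only the subtracted amounts, the run on $(M,1)$ would give just $\{1\}$, so the conclusion does depend on collecting every coordinate value. That is the convention the paper actually uses, as its embedding-dimension-three example shows ($E(109,142)$ contributes $142$, $109$, $76$, $43,\ldots$).
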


\begin{theorem}[GCD Reduction]
\label{thm:gcd-reduction}
Let
\[
g=\gcd(h-1,d),
\]
and define
\[
h'=1+\frac{h-1}{g},
\qquad
d'=\frac{d}{g}.
\]
Then
\[
\boxed{
\Delta(S_h)=g\,\Delta(S_{h'}).
}
\]
\end{theorem}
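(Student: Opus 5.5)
The plan is to deduce the statement directly from Theorem~\ref{thm:maintheorem} and never re-examine length sets. Throughout, $S_{h'}$ means the monoid $\langle a, ah'+d', ah'+2d',\ldots, ah'+kd'\rangle$, with $d$ replaced by $d'=d/g$.

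\textbf{Step 1: $S_{h'}$ satisfies the hypotheses of Theorem~\ref{thm:maintheorem}.}
\begin{itemize}
\item Since $d'\mid d$, we have $\gcd(a,d')=1$.
\item The parameters $a$ and $k$ are unchanged, so $1<k<a$ still holds.
\item Since $h-1\ge 1$ and $g\mid h-1$, we get $h'=1+(h-1)/g\ge 2>1$.
\end{itemize}
So Theorem~\ref{thm:maintheorem} applies to $S_{h'}$ with $e'=h'-1=e/g$ and $M'=\lceil a/k\rceil e'+d'$. Because $q=\lceil a/k\rceil$ depends only on $a$ and $k$, we get $M'=(qe+d)/g=M/g$.

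\textbf{Step 2: the arithmetic component scales.} For every $r\ge 0$,
\[
M-re=g(M'-re').
\]
Since $g>0$, the condition $M-re>0$ holds exactly when $M'-re'>0$. Hence
\[
\{M-re:r\ge0,\ M-re>0\}=g\{M'-re':r\ge0,\ M'-re'>0\}.
\]

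\textbf{Step 3: the Euclidean component scales.} I will show $E(e,d)=gE(e',d')$, reading $E(x,y)$ as the set of all entries appearing in the pairs $(x_i,y_i)$ of the subtractive Euclidean algorithm started at $(x,y)$. The argument is an induction on the step index $i$: if $(x_i,y_i)=g(x_i',y_i')$, then the same comparison ($>$, $<$ or $=$) holds for both pairs, because multiplying by $g>0$ preserves order. So both runs take the same branch, giving $(x_{i+1},y_{i+1})=g(x_{i+1}',y_{i+1}')$, and they terminate at the same step. The base case is $(e,d)=g(e',d')$. Collecting all entries gives the claim.

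This is the step I expect to need the most care. The paper's notation for $E$ is not fully fixed: it writes both $E(e,d)$ and $E(d,h-1)$, and uses the reduction $E(e,d)=E(e,d\bmod e)$. Whatever convention is used, I must make sure it is invariant under simultaneous scaling. The residue reduction is harmless here, since $(d\bmod e)=g(d'\bmod e')$.

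\textbf{Step 4: combine.} Substituting Steps 2 and 3 into Theorem~\ref{thm:maintheorem} for both monoids gives
\[
\Delta(S_h)=gE(e',d')\cup g\{M'-re'>0\}=g\,\Delta(S_{h'}).
\]

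As a remark, $\gcd(e',d')=1$, so the subtractive algorithm on $(e',d')$ terminates at $(1,1)$. Hence $1\in\Delta(S_{h'})$, and since every element of $\Delta(S_{h'})$ is at least $1$, we get $\min\Delta(S_h)=g$, which is the corollary mentioned in the abstract.
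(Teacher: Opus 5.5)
Your proposal is correct and follows essentially the same route as the paper: both apply Theorem~\ref{thm:maintheorem} to $S_h$ and $S_{h'}$, then observe that the arithmetic component and the Euclidean component each scale by exactly $g$. Your explicit check that $S_{h'}$ satisfies the hypotheses of that theorem ($\gcd(a,d')=1$, $h'>1$), and your induction proving that the subtractive algorithm is homogeneous, spell out what the paper leaves implicit or asserts in one line.
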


\begin{proof}
Set
\(
e=h-1.
\)
Then
\(
e=ge', d=gd',
\)
where
\(
e'=h'-1=\frac{h-1}{g}.
\)
Since the Euclidean algorithm is homogeneous, we have
\[
E(d,e)
=
E(gd',ge')
=
gE(d',e').
\]
Now consider the arithmetic component. Set
\[
M=\left\lceil\frac{a}{k}\right\rceil e+d.
\]
Using
\[
e=ge',
\qquad
d=gd',
\]
we obtain
\[
M
=
\left\lceil\frac{a}{k}\right\rceil ge'+gd'
=
g\left(
\left\lceil\frac{a}{k}\right\rceil e'+d'
\right).
\]
Define
\[
M'
=
\left\lceil\frac{a}{k}\right\rceil e'+d'.
\]
Then
\[
M=gM'.
\]
Consequently,
\[
M-re
=
gM'-rge'
=
g(M'-re').
\]
Therefore,
\[
\left\{
M-re>0 : r\geq 0
\right\}
=
g
\left\{
M'-re'>0 : r\geq 0
\right\}.
\]
By the description of the delta set,
\[
\Delta(S_h)
=
E(d,e)
\cup
\left\{
M-re>0 : r\geq 0
\right\}.
\]
\newpage
Substituting the identities above gives
\(
\Delta(S_h)
=
gE(d',e')
\cup
g
\left\{
M'-re'>0 : r\geq 0
\right\}.
\)
Factoring out \(g\), we obtain
\(
\Delta(S_h)
=
g
\left(E(d',e')\cup\left\{M'-re'>0 : r\geq 0\right\}\right).\)
Again by the description of the delta set, the expression
inside the parentheses is  \(\Delta(S_{h'}).\)
Hence\[\boxed{\Delta(S_h)=g\,\Delta(S_{h'}).}\]
\end{proof}

\section{Minimum}
\label{sec:Minimum}
The proof for the minimum of the Delta Set in relation to generalized arithmetic sequences as a simple consequence of a theorem proven by Garcia, Omar, O'Neill, and Yih \cite{GarciaOmarONeillYih}

Let \[n=\ \langle n_1,n_2,n_3,...n_k \rangle\] be minimally generated. 
\medskip
Then, \[min(\Delta(S) = gcd(n_k-n_{k-1},n_{k-1} - n_{k-2}...,n_2-n_1)\]

Applying this theorem to the numerical monoid we are working with, we yield
\[min(\Delta(S) = gcd(ah+kd-\langle ah+(k-1)d),ah+(k-1)d - (ah+(k-2)d)) ..., (ah+d) - a \rangle\]
\[min(\Delta(S) = gcd(d,d ..., (ah+d) - a )\]
\[min(\Delta(S) = gcd(d,d ..., a(h-1)+d )\]
\[min(\Delta(S) = gcd(d,d ..., a(h-1) )\]
However, because\[gcd (a,d)=1,\] the preceding expression simplifies to \[gcd(d,h-1)\]

\section{Fibonacci Sequence Generation}
First, we demonstrate how to embed Fibonacci sequences in delta sets.
Set
\[
\left\lceil \frac{a}{k} \right\rceil = 2.
\]
Then
\[
M = 2(h-1) + d.
\]

Set
\[
h-1 = e.
\]
Thus,
\[
M = 2e + d.
\]

Let
\[
e = F_{n-2}, \qquad d = F_{n-3}.
\]

Since
\[
F_n = F_{n-1} + F_{n-2}
\]
and
\[
F_{n-1} = F_{n-2} + F_{n-3},
\]
we obtain
\[
F_n = 2F_{n-2} + F_{n-3}.
\]

Therefore,
\[
M = 2e + d = F_n.
\]
 $\Delta(S_h) =E(M,e)$ can also be rewritten as $E(d,e) \cup \{M-re \ | \ M-re > 0, r\geq 0\}.$ Applying this framework, the set $E(e,d)$ gives all elements of the Fibonacci sequence up
to $F_{n-2}$. Meanwhile, the sequence
\(
M - re
\)
gives
\(
M = F_n
\)
and
\(
M - e = F_n - F_{n-2} = F_{n-1}.
\)

Taking the union, we obtain the entire Fibonacci set
\[
\{1, 2, 3, 5, 8, \ldots, F_{n-3}, F_{n-2}, F_{n-1}, F_n\},
\]
where
\[
F_n = M.
\]

\section{ Metallic Fibonacci Sequences}
Our next idea is to see if we can generalize these results to those with $k>1$ in the sequence $F_{n}= kF_{n-1}+F_{n-2}.$ This was firstly computationally experimented with until a relevant conjecture could be extracted. Using first principles, we then proved this conjecture.
Let
\[
P_n = kP_{n-1} + P_{n-2}.
\]

\begin{theorem}
There exist $k$ metallic Fibonacci sequences of the same order
(i.e., with the same $k$) in the delta set. Equivalently,
\[
E(P_n, P_{n-1})
=
\bigcup_{i=2}^{n}
\bigcup_{j=0}^{k-1}
\left\{ Q_i^j \right\}.
\]
\end{theorem}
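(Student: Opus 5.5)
We read $Q_i^j$ as the intermediate values produced while the subtractive Euclidean algorithm processes the pair $(P_i,P_{i-1})$. That is,
\[
Q_i^j=P_i-jP_{i-1}=(k-j)P_{i-1}+P_{i-2},\qquad 0\le j\le k-1 .
\]
Here $k$ is the metallic parameter, not the number of generators. We fix the initial conditions $P_0=0$ and $P_1=1$, so that the algorithm terminates at the pair $(1,1)$, and we regard $E(x,y)$ as the set of all coordinates appearing during the run. The plan is to prove the identity by induction on $n$, using only the definition of the subtractive Euclidean algorithm from Section~\ref{sec:preliminaries}.

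\textbf{Step 1: one block of subtractions.} Starting from $(P_n,P_{n-1})$ with $n\ge 2$, we have $P_n>P_{n-1}$ because $P_n=kP_{n-1}+P_{n-2}$ with $P_{n-2}\ge 0$. As long as the first coordinate exceeds $P_{n-1}$, the algorithm subtracts $P_{n-1}$ from it. After $j$ steps the first coordinate is $(k-j)P_{n-1}+P_{n-2}$, which exceeds $P_{n-1}$ exactly for $j\le k-1$ whenever $P_{n-2}>0$. So the first $k$ subtractions produce exactly $Q_n^0,\dots,Q_n^{k-1}$ and reach the pair $(P_{n-2},P_{n-1})$. The roles of the two coordinates then swap, and the run continues as the run on $(P_{n-1},P_{n-2})$, up to ordering. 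This yields the recursion
\[
E(P_n,P_{n-1})=\{Q_n^0,\dots,Q_n^{k-1}\}\cup E(P_{n-1},P_{n-2}).
\]
The value $P_{n-1}$ appears in both sets, so nothing is lost.

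\textbf{Step 2: induction and base case.} Iterating the recursion reduces the claim to a base case near $i=2$. There $P_2=k$ and $P_1=1$, and the algorithm on $(k,1)$ runs through $k,k-1,\dots,1$, giving $Q_2^j=k-j$ for $0\le j\le k-1$, and then terminates at $(1,1)$. This block matches $\{Q_2^j\}$ exactly, so the union over $2\le i\le n$ and $0\le j\le k-1$ equals $E(P_n,P_{n-1})$.

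\textbf{Step 3: the ``$k$ sequences'' interpretation.} For each fixed $j$, the map $i\mapsto Q_i^j=P_i-jP_{i-1}$ is a linear combination of shifts of $(P_i)$. It therefore satisfies the same recurrence $Q_i^j=kQ_{i-1}^j+Q_{i-2}^j$, so it is a metallic Fibonacci sequence of the same order. This gives exactly $k$ such sequences, indexed by $j$, with $j=0$ recovering $(P_i)$ itself.

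If one wants this set realized inside a delta set, as in the Fibonacci section, choose $e=P_{n-1}$ and $d=P_{n-2}$ with $\lceil a/k'\rceil=k$, where $k'$ is the number of generators. Then $M=P_n$, and Theorem~\ref{thm:maintheorem} together with the Remark's identity $E(M,e)=E(e,d)\cup\{M-re>0\}$ gives $\Delta(S_h)=E(P_n,P_{n-1})$.

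\textbf{Main obstacle.} We expect the hardest part to be the boundary bookkeeping rather than the algebra. One must pin down what $Q_i^j$ is meant to be, fix the initial conditions, and check the degenerate case $P_{n-2}=0$, where the block has a different length. One must also confirm that the endpoint values ($P_{i-1}$ and the terminal $1$) are counted consistently with the definition of $E$. We would settle these conventions first and then verify the base case explicitly.
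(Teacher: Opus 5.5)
Your proposal is correct and is essentially the paper's argument: peel off the block $\{P_i-jP_{i-1}:0\le j\le k-1\}$ using $P_i=kP_{i-1}+P_{i-2}$, unroll to $E(P_n,P_{n-1})=\bigcup_{i=2}^{n}B_i$, and check by linearity that each fixed-$j$ column satisfies the metallic recurrence. Your explicit initial conditions $P_0=0$, $P_1=1$, the base case $(k,1)$, and your use of the pairwise algorithm of Section~\ref{sec:preliminaries} supply bookkeeping the paper leaves implicit; the one slip is in your realization remark, where $k'$ should be the paper's $k$ (the number of generators other than $a$), not the total number of generators.
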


We define $Q_i^j = P_i - jP_{i-1}$ for a fixed $j$.

\begin{definition}
    The \emph{Subtractive Euclidean Algorithm}, which we denote as $E(x,y)$, can be recursively defined in this way: 
    1. Let $x>y$. 
    2. Set $x=ky+r$. 
    Then $E(x,y) = \{x-jy: 0 \leq j \leq k-1 \} \cup E(y,r)$, with termination occurring when both terms are equal.
\end{definition}

Note that the Metallic Fibonacci Sequence occupies a similar form to the Subtractive Euclidean Algorithm. In fact, if we set $x, y, r = P_i, P_{i-1}, P_{i-2}$, then they occupy the exact same form. Therefore, we can apply the subtractive Euclidean Algorithm.
\[
E(P_{i}, P_{i-1}) = \{P_i - jP_{i-1} : 0 \leq j \leq k-1\} \cup E(P_{i-1}, P_{i-2})
\]
Repeating this would get 
\[
E(P_{i-1}, P_{i-2}) = \{P_{i-1} - jP_{i-2} : 0 \leq j \leq k-1\} \cup E(P_{i-2}, P_{i-3})
\]
The recurrence pattern suggests an introduction of a variable. Let $B_i = \{P_i - jP_{i-1} : 0 \leq j \leq k-1\}$.
$E(P_n, P_{n-1})$ then becomes the union of $B$ over all $i$. 
\[
E(P_n, P_{n-1}) = \bigcup_{i=2}^n B_i
\]
For each $j$, we would have the sequence $P_2 - jP_1, P_3 - jP_2, P_4 - jP_3, \dots, P_n - jP_{n-1}$.
If we then rearrange the blocks $B_1, B_2, B_3, \dots, B_n$ so that the value of each block at $j$ is utilized to create a sequence, we then see that that sequence is itself metallic.
For example,
\[
\begin{array}{c|cc}
    & j=0 & j=1 \\
\hline
B_2 & 2  & 1 \\
B_3 & 5  & 3 \\
B_4 & 12 & 7 \\
B_5 & 29 & 17 \\
B_6 & 70 & 41
\end{array}
\]
\[
2 \rightarrow 5 \rightarrow 12 \rightarrow 29 \rightarrow 70 
\]
\emph{This satisfies the Pell Sequence.}
We see here that $x_{j+1} = kx_{j} + x_{j-1}$.
Our goal is to prove that this pattern is not mere coincidence, but a natural result.

Define $Q^{(j)} = P_{i} - jP_{i-1}$.

\begin{proposition}
    $Q_i^j$ is a metallic sequence, and the delta set consists of exactly $k$ of these sequences.
\end{proposition}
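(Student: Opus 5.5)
The plan is to prove two things separately: that for each fixed $j\in\{0,\dots,k-1\}$ the sequence $i\mapsto Q_i^j=P_i-jP_{i-1}$ satisfies the metallic recurrence, and that the union of these $k$ sequences over $2\le i\le n$ is exactly $E(P_n,P_{n-1})$.

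\textbf{Step 1 (the recurrence).} Fix $j$. For $i\ge 3$, linearity of the recurrence gives
\[
kQ_{i-1}^j+Q_{i-2}^j=(kP_{i-1}+P_{i-2})-j(kP_{i-2}+P_{i-3})=P_i-jP_{i-1}=Q_i^j .
\]
This uses $P_i=kP_{i-1}+P_{i-2}$ at indices $i$ and $i-1$, so it needs $P_{i-3}$ to be defined. I will therefore fix the indexing convention $P_0=0$ and $P_1=1$, which also makes the $i=2$ block meaningful. With $k=2$ this convention reproduces the Pell table above: $Q^0$ is $(P_i)$ and $Q^1$ is $(P_i-P_{i-1})$.

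\textbf{Step 2 (the blocks).} I will prove by downward induction on $i$, starting from $i=n$, that
\[
E(P_i,P_{i-1})=B_i\cup E(P_{i-1},P_{i-2}), \qquad B_i=\{P_i-jP_{i-1}:0\le j\le k-1\}.
\]
This is an application of the recursive definition of $E$ just given, with $x=P_i$, $y=P_{i-1}$, quotient $k$ and remainder $r=P_{i-2}$. The application is legitimate exactly when $P_i=kP_{i-1}+P_{i-2}$ is the genuine division with remainder, that is, when $0\le P_{i-2}<P_{i-1}$.

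I expect this to be the main obstacle, because of the boundary cases. For $i\ge 3$ the condition $P_{i-2}<P_{i-1}$ follows from $P_{i-1}=kP_{i-2}+P_{i-3}$ with $k\ge 1$ and positivity. At the bottom, however, the pair $(P_2,P_1)=(k,1)$ must be handled by hand. With the convention above $B_2=\{k,k-1,\dots,1\}$, and the algorithm terminates at $(1,1)$. I must check this agrees with the paper's termination rule ``when both terms are equal'', and that no spurious value such as $0$ gets included.

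\textbf{Step 3 (assembling).} Unrolling the induction gives $E(P_n,P_{n-1})=\bigcup_{i=2}^n B_i$. Regrouping this union by $j$ rather than by $i$ yields $\bigcup_{j=0}^{k-1}\{Q_i^j:2\le i\le n\}$, which is the union of $k$ metallic sequences by Step 1.

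\textbf{Step 4 (the ``delta set'' clause).} To connect with $\Delta(S_h)$ I will invoke the reformulation $\Delta(S_h)=E(M,e)$ recorded in the $k=2$ remark and in the Fibonacci section. I choose parameters with $e=P_{n-1}$ and $M=P_n$, which means $\lceil a/k'\rceil e+d=P_n$, where $k'$ denotes the monoid's own $k$ (distinct from the metallic $k$). For example, take $\lceil a/k'\rceil=k$ and $d=P_{n-2}$. Since consecutive $P_i$ are coprime, this is admissible once $\gcd(a,d)=1$ is arranged.

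Finally, I will argue the word ``exactly'' in the statement is justified. The $k$ sequences are distinct for distinct $j$, since $P_i-jP_{i-1}$ is strictly decreasing in $j$. They are also disjoint: within the range $2\le i\le n$ the blocks $B_i$ occupy the disjoint intervals $(P_{i-1},P_i]$, and each $B_i$ has exactly $k$ distinct elements, so together the $k$ sequences account for the whole set.
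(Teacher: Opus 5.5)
Your proposal is correct and follows the paper's route: the recurrence for $Q^j$ by linearity of $P_i=kP_{i-1}+P_{i-2}$, the block decomposition $E(P_n,P_{n-1})=\bigcup_{i=2}^n B_i$ from the recursive definition of $E$, regrouping by $j$, and the identification $\Delta(S_h)=E(M,e)$ with $M=P_n$, $e=P_{n-1}$. Your additions only make explicit what the paper leaves implicit: normalizing $P_0=0$, $P_1=1$, checking the terminal pair $(k,1)$, and realizing $M=P_n$ via $\lceil a/k'\rceil=k$, $d=P_{n-2}$. These rely on the metallic $k\ge 2$, which is also what gives $P_1<P_2$ at $i=3$ since $P_0=0$. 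The one slip is that $B_2=\{1,\dots,k\}$ equals $[P_1,P_2]$ rather than $(P_1,P_2]$, which leaves disjointness intact.
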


\begin{proof}
    Recall that a metallic fibonacci sequence is defined as $P_n = kP_{n-1} + P_{n-2}$.
    In this case, we should prove that this recurrence holds for $Q_i^{j+1}$:
    \begin{align*}
        Q_i^{j+1} &= P_{i+1} - jP_{i} \\
        &= kP_i + P_{i-1} - jP_i \\
        &= kP_{i} + P_{i-1} - j(kP_{i-1} + P_{i-2}) \\
        &= kP_i - jkP_{i-1} + P_{i-1} - jP_{i-2} \\
        &= k(P_i - jP_{i-1}) + Q_{i-1}
    \end{align*}
    \[
    Q_{i+1}^j = kQ^j + Q_{i-1}
    \]
\end{proof}

Because $j$ ranges from $0$ to $k-1$, there are precisely $k$ of these sequences of order $n-1$, which leads us to the above theorem.

Set $P_{n-1} = h-1, P_{n} = M$.
By the above theorem, we get 
\[
E(P_n, P_{n-1}) = \bigcup_{i=2}^n \bigcup_{j=0}^{k-1} \{Q_i^j\}
\]
If we set $P_n = M, P_{n-1} = h-1$, we then get 
\[
E(M,e) = \bigcup_{i=2}^n \bigcup_{j=0}^{k-1} \{Q_i^j\}
\]
However, $E(M,e) = \Delta(S_h)$.
Therefore, 
\[
\Delta(S_h) = \bigcup_{i=2}^n \bigcup_{j=0}^{k-1} \{Q_i^j\}
\]
The significance of this theorem is that by setting $P_n = M, P_{n-1} = e$, one can produce exactly $k$ sequences of order $n-1$ that are \emph{Metallic}.

\section{Catenary Degree Set}

This section provides a proof that under structural conditions, $h+1 \in C(S)$

\begin{theorem}
\[
h+1 \in C(S)
\] 
when $k > 2$.
\end{theorem}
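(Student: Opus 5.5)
The plan is to exhibit one element $s\in S_h$ whose catenary degree is exactly $h+1$. The natural candidate is an element built from the relations $n_i+n_j=h\,n_0+n_{i+j}$ from Section~\ref{sec:maximaldelta} (where $n_0=a$ and $n_i=ah+id$). The hypothesis $k>2$ suggests taking
\[
s=n_1+n_2=2ah+3d=h\,a+n_3,
\]
which needs $n_3$ to be a generator, i.e.\ $k\geq 3$. This $s$ has the factorization $n_1+n_2$ of length $2$ and the factorization $h\cdot a+n_3$ of length $h+1$. These two factorizations have disjoint supports, so their distance is $\max\{2,h+1\}=h+1$, because $h>1$. It then suffices to show these are the only two factorizations of $s$. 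In that case every chain in $Z(s)$ between them must contain this single step, so $c(s)=h+1$ and $h+1\in C(S)$.

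To determine $Z(s)$ completely, I would use the lattice parametrization of Section~\ref{sec:lattice}. Write $s=ax+ahy+dm$ with $m=\sum iy_i$ and $y=\sum y_i$. Since $s\equiv 3d\pmod a$ and $\gcd(a,d)=1$, we get $m\equiv 3\pmod a$, so $m=3+aj$ with $j\geq0$, and $x\geq 0$ forces $ahy+dm\leq s=2ah+3d$.

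\textbf{Column $j=0$, i.e.\ $m=3$.} Here $1\le y\le 3$, and the cases are as follows.
\begin{itemize}
\item $y=1$ gives $n_3$, the factorization $h\cdot a+n_3$.
\item $y=2$ forces $\{1,2\}$ as the pair of indices, giving $n_1+n_2$.
\item $y=3$ forces $3n_1$, whose value $3ah+3d$ already exceeds $s$.
\item $y=0$ is impossible, since $m=3>0$.
\end{itemize}

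\textbf{Columns $j\geq1$.} Here $m\geq a+3>k$, so $y\geq 2$ because a single generator contributes at most $k<a$ to $m$. Then
\[
ahy+dm\geq 2ah+d(a+3)>2ah+3d=s,
\]
a contradiction. Hence $Z(s)$ has exactly two elements.

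I expect the main obstacle to be this exhaustiveness check, namely ruling out every other column $j\geq1$. The bound $m\le ky$ together with $k<a$ handles it cleanly. The remaining point is the convention that $C(S)$ is the set of catenary degrees $c(s)$ of individual elements. For an element with exactly two factorizations, $c(s)$ equals their distance, and this closes the argument.
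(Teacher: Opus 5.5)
Your proof is correct and uses the same construction as the paper, which takes $s=hn_0+n_{i+j}=n_i+n_j$ for general $i\neq j$ with $i+j\leq k$ and notes that the factorizations of lengths $h+1$ and $2$ have disjoint supports. The paper then asserts $c(s)=h+1$ from that observation alone, which does not follow without controlling the rest of $Z(s)$. Your choice $i=1$, $j=2$, together with the column check showing that $Z(s)$ has exactly two elements, supplies exactly this missing step.
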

\begin{remark}
    We need $k > 2$ since this ensures that $l(n_i), l(n_j), l(n_{i+j}), l(n_0)$ can occupy different coordinates in the factorization length vector. The indices $i, j$ must satisfy
\(
i+j \leq k
\) so that enough generators can be used for $hn_0+n_{i+j}=n_i+n_j.$
\end{remark}

\begin{proof}
We aim to construct an element $s \in S$ such that
\(
c(s) = h+1.
\)

Let
\[
s = hn_0 + n_{i+j},
\]
where
\[
n_0 = a, \qquad n_i = ah + id,
\]
with $i \neq j$ and
\[
i+j \leq k.
\]

Since
\[
hn_0 + n_{i+j}
=
ha + ah + (i+j)d
\]
and
\[
n_i + n_j
=
(ah + id) + (ah + jd),
\]
we obtain
\[
hn_0 + n_{i+j} = n_i + n_j.
\]

Thus, $s$ has two factorizations.

The first factorization has length
\[
|z| = h+1,
\]
while the second factorization has length
\[
|z'| = 2.
\]

Moreover,
\[
\gcd(z,z') = 0,
\]
so the two factorizations lie in distinct $R$-classes.
Therefore,
\[
c(s) = h+1,
\]
and hence
\[
h+1 \in C(S).
\]
 
\end{proof}

\section{Conjectures Related to the Catenary Degree Sets}

\begin{conjecture}
if \[k \geq 3, k \leq a-2, 2\in C(s)\]
Furthermore, the set \[ [0,h+1] \] contains only \[0,2,h+1\]
\end{conjecture}
\begin{conjecture}
\[\left\lceil \frac{a-k}{k-1} \right\rceil + h+d \in C(S)\]
\end{conjecture}
\begin{conjecture}
For \[S_h =\langle a,ah,ah+d,ah+2d,...ah+kd \rangle,\] 
\[\Delta(S_h) = E(e,d) \]
\end{conjecture}
\section*{Future Work}

Several directions emerge naturally from the structural and computational
results developed in this paper.

\begin{itemize}

    \item \textbf{Catenary Degree Sequences:}
    The relation
    \[
    c(S_h)
    =
    \max\Delta(S_h)
    +
    \left\lceil\frac{a}{k}\right\rceil
    \]
    suggests a deeper connection between delta sets and catenary
    degrees. A following question is whether the Euclidean component
    \[
    E(d,h-1)
    \]
    controls any portion of the catenary degree sequence besides the
    maximal element.

    \item \textbf{Representation-Theoretic Connections:}
    The lattice parametrization used in this paper suggests a possible
    connection with representation theory and the algebraic structures
    associated to numerical semigroups. It would
    be interesting to investigate whether the generators or Gröbner bases of \[I_{S_h}\] explain the Euclidean Component and Arithmetic Sequence

    \item \textbf{Topological Structure of Factorizations:}
    The factorization lattice and its column decomposition also suggest
    a topological perspective. Associating simplicial complexes,
    factorization graphs, or other combinatorial topological objects
    to the factorizations of elements of $S_h$ and ask whether their
    topological invariants can help predict information about the delta sets is one way. In particular, it would be interesting to
    determine whether changes in connectivity or higher-dimensional
    homology correspond to the appearance of particular elements of
    $\Delta(S_h)$.

    \item \textbf{Extension to Other Families of Numerical Monoids:}
    The methods of Euclidean analysis and Lattice Parametrization may have corresponding analogues for other families of numerical monoids, which leads to the question of how these methods may be implemented to study other families.

\end{itemize}

\section{Acknowledgments}

The author would first like to thank Professor Nathan Kaplan for his generosity in reading through the text, supporting the author through submission, and helping with certain proofs. Additionally, the author would like to acknowledge the use of AI for suggestions in regards to the geometric lattice proof, though all suggestions were independently checked and rederived by the author.

Department of Mathematics, University of Chicago.\\
Email address: \texttt{anarayan2030@uchicago.edu}

\vspace{1em}

\noindent
\begin{thebibliography}{99}

\bibitem{ONeill}
C. O'Neill and R. Pelayo,
\textit{Factorization invariants in numerical monoids},
Algebraic and Geometric Methods in Discrete Mathematics
(ed. H. Harrington, M. Omar, M. Wright),
Contemporary Mathematics \textbf{685} (2017), 231--249.

\bibitem{RosalesGarcia}
J. C. Rosales and P. A. Garc\'ia-S\'anchez,
\textit{Numerical Semigroups},
Developments in Mathematics, vol. 20, Springer, New York, 2009.


\bibitem{BowlesChapman}
C. Bowles, S. T. Chapman, N. Kaplan, and D. Reiser,
\textit{On delta sets of numerical monoids},
Journal of Algebra and Its Applications \textbf{5} (2006), no. 5, 695--718.

\bibitem{GarciaLlenaMoscarielloSym}
P. A. Garc\'ia-S\'anchez, D. Llena, and A. Moscariello,
\textit{Delta sets for symmetric numerical semigroups with embedding dimension three},
Communications in Algebra \textbf{45} (2017), no. 11, 4653--4658.

\bibitem{GarciaLlenaMoscarielloNonSym}
P. A. Garc\'ia-S\'anchez, D. Llena, and A. Moscariello,
\textit{Delta sets for nonsymmetric numerical semigroups with embedding dimension three},
Forum Mathematicum \textbf{30} (2018), no. 1, 15--30.

\bibitem{Chapman}
S. T. Chapman, P. A. Garc\'ia-S\'anchez, D. Llena, V. Ponomarenko, and J. C. Rosales,
\textit{On the delta set and the Betti elements of a BF-monoid},
Arabian Journal of Mathematics \textbf{1} (2012), 53--61.


\bibitem{Rahmati}
M. Omidali and F. Rahmati,
\textit{On the type and the minimal presentation of certain numerical subgroups},
Communications in Algebra \textbf{37} (2009), no. 4, 1275--1283.




\bibitem{Omidali}
M. Omidali,
\textit{The catenary and tame degree of numerical monoids generated by generalized arithmetic sequences},
Forum Mathematicum \textbf{24} (2012), no. 3, 627--640.

\bibitem{GarciaOmarONeillYih}
S. Garcia, M. Omar, C. O'Neill, and S. Yih,
\textit{Factorization length distribution for affine semigroups II: Asymptotic behavior for numerical semigroups with arbitrarily many generators},
Journal of Combinatorial Theory, Series A \textbf{178} (2021), 105358.

\end{thebibliography}
\end{document}